\setlist{nolistsep}
\newtheorem{theorem}{Theorem}[section]
\newtheorem{lemma}[theorem]{Lemma}
\newtheorem{corollary}[theorem]{Corollary}
\newtheorem{remark}[theorem]{Remark}
\newtheorem{result}[theorem]{Result}
\newenvironment{proof}{\noindent{\bf Proof}\hspace{0.5em}}
    { \null  \hfill $\square$ \par}
\newcommand\B{{\mathscr B}}
\newcommand\C{{\cal C}}
\newcommand\V{{\cal V}}
\newcommand\var{{{\mathcal V}^5_2}}
\newcommand\N{{\cal N}}
\renewcommand{\S}{\mathcal S}
\newcommand{\K}{\mathcal K}
\newcommand{\si}{\Sigma_\infty}
\newcommand{\li}{\ell_\infty}
\renewcommand\setminus{\backslash}
\newcommand{\st}{\,|\,}
\newcommand\PGL{{\rm PGL}}
\newcommand\PG{{\rm PG}}
\newcommand\AG{{\rm AG}}
\newcommand{\Label}{\label}
    \newcommand{\Fq}{\mathbb F_{q}}
\renewcommand{\lambda}{t}
\renewcommand{\star}{{^{\mbox{\tiny\ding{73}}}}}
\begin{document}
%
%

\title{A ruled quintic surface in $\PG(6,q)$}
\author{S.G. Barwick}

\maketitle

AMS code: 51E20

keywords: projective space, varieties, scroll, Bruck-Bose representation

\abstract{In this article we look at a scroll of $\PG(6,q)$ that uses a projectivity to rule a conic and a twisted cubic. We show this scroll is a ruled quintic surface $\var$, and study its geometric properties. The motivation in studying this scroll lies in its relationship with an $\Fq$-subplane of $\PG(2,q^3)$ via the Bruck-Bose representation.}

\section{Introduction}

In this article we consider a scroll of $\PG(6,q)$ that rules a conic and a twisted cubic according to a projectivity. The motivation in studying this scroll lies in its relationship with an $\Fq$-subplane of $\PG(2,q^3)$ via the Bruck-Bose representation as described in Section~\ref{sec-BB}.
In $\PG(6,q)$, let $\C$ be a non-degenerate conic in a plane $\alpha$, $\C$ is called the {\em conic directrix}. Let $\N_3$ be a twisted cubic in a 3-space $\Pi_3$ with $\alpha\cap\Pi_3=\emptyset$, $\N_3$ is called the {\em twisted cubic directrix}.
 Let $\phi$ be a projectivity from the points of $\C$ to the points of $\N_3$. 
By this we mean that if we write the points of $\C$ and $\N_3$ using  a non-homogeneous parameter, so $\C=\{C_\theta=(1,\theta,\theta^2)\st\theta\in\Fq\cup\{\infty\}\}$, and $\N_3=\{N_\epsilon=(1,\epsilon,\epsilon^2,
\epsilon^3)\st\epsilon\in\Fq\cup\{\infty\}\}$, then $\phi\in\PGL(2,q)$ is a projectivity mapping $(1,\theta)$ to $(1,\epsilon)$. 
Let $\V$ be the set of points of $\PG(6,q)$ lying on the $q+1$ lines  joining each point of $\C$ to the corresponding point (under $\phi$) of $\N_3$.  These $q+1$ lines are called the {\em generators} of $\V$. As the two subspaces $\alpha$ and $\Pi_3$ are disjoint,  $\V$ is not contained in a 5-space.
  We note that this construction generalises the ruled cubic surface $\V^3_2$ in $\PG(4,q)$, a variety that has been well studied, see \cite{vincenti}. 
    
We will be working with normal rational curves in $\PG(6,q)$, if $\N$ is a normal rational curve that generates an $i$-dimensional space, then we call $\N$ an  {\em $i$-dim nrc}, and often use the notation $\N_i$. 
See \cite{HT} for details on normal rational curves. 
As  we will be looking at   5-dim nrcs contained in $\V$, 
we will assume $q\geq 6$ throughout. 

This article studies the geometric structure of $\V$. 
In Section~\ref{sec-simple}, we show that $\V$ is a variety $\var$ of order 5 and dimension 2, and that all such scrolls are projectively equivalent. Further, we show that $\V$ contains exactly $q+1$ lines and one non-degenerate conic. 
In Section~\ref{sec-BB},  we describe the Bruck-Bose representation of $\PG(2,q^3)$ in $\PG(6,q)$, and discuss how $\V$ corresponds to an $\Fq$-subplane of $\PG(2,q^3)$. We use the Bruck-Bose setting to 
 show that $\V$ contains exactly $q^2$ twisted cubics, and that each can act as a directrix of $\V$. In Section~\ref{sec-5}, we count the number of 4 and 5-dim nrcs contained in $\V$. Further, 
we determine how 5-spaces meet $\V$, and count the number of 5-spaces of each intersection type. The main result is Theorem~\ref{count-nrc}.
In Section~\ref{sec-BB-5}, we determine how 5-spaces meet $\V$ in relation to the regular 2-spread in the Bruck-Bose setting.

\section{Simple properties of $\V$}\Label{sec-simple}

\begin{theorem}\Label{the-nine-quad}
Let $\V$ be a scroll of $\PG(6,q)$ that rules a conic and a twisted cubic according to a projectivity. Then 
 $\V$ is a variety of dimension 2 and order 5, denoted $\var$ and called a ruled quintic surface. Further, any two ruled quintic surfaces are projectively equivalent. 
\end{theorem}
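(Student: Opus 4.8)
The plan is to reduce every ruled quintic surface to a single normal form; this simultaneously delivers the projective-equivalence statement and lets me compute the dimension and order on one concrete model. Fix standard coordinates with $\alpha=\langle e_0,e_1,e_2\rangle$ and $\Pi_3=\langle e_3,e_4,e_5,e_6\rangle$, which are disjoint and span $\PG(6,q)$. Given an arbitrary $\V$ built from $(\C,\N_3,\phi)$, since $\PGL(7,q)$ is transitive on pairs consisting of a plane and a disjoint $3$-space, a projectivity carries the conic plane onto $\alpha$ and the twisted-cubic $3$-space onto $\Pi_3$. A block-diagonal projectivity acting independently on $\alpha$ and $\Pi_3$ then carries $\C$ to the standard conic and $\N_3$ to the standard twisted cubic, because nondegenerate conics form one orbit under $\PGL(3,q)$ and twisted cubics one orbit under $\PGL(4,q)$. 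Finally, the stabiliser of $\N_3$ induces all of $\PGL(2,q)$ on its parameter $\epsilon$; extending the element $\phi^{-1}$ by the identity on $\alpha$ reparametrises $\N_3$ so that each generator joins $C_\theta$ to $N_\theta$, while fixing $\C$ pointwise. Hence every ruled quintic surface is projectively equivalent to the single model $\V_0$ below; this proves that any two are projectively equivalent, and reduces the variety claims to $\V_0$ since dimension and order are projective invariants.

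On $\V_0$ the generator $g_\theta$ joins $C_\theta=(1,\theta,\theta^2)$ to $N_\theta=(1,\theta,\theta^2,\theta^3)$, so its points are
\[
(s,\,s\theta,\,s\theta^2,\,t,\,t\theta,\,t\theta^2,\,t\theta^3),\qquad \theta\in\Fq\cup\{\infty\},\ (s:t)\in\PG(1,q).
\]
I would first check the generators are pairwise disjoint: a common point of $g_\theta$ and $g_{\theta'}$ would have matching $\alpha$- and $\Pi_3$-components, forcing $C_\theta=C_{\theta'}$ and $N_\theta=N_{\theta'}$, hence $\theta=\theta'$. So $\V_0$ is a union of $q+1$ pairwise disjoint lines, one through each point of the parameter line $\theta\in\PG(1,q)$; as the incidence set $\{(\theta,P):P\in g_\theta\}$ is irreducible of dimension $2$ and maps bijectively onto $\V_0$, the surface $\V_0$ is an irreducible variety of dimension $2$, with $|\V_0|=(q+1)^2$.

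For the order I would take a generic codimension-$2$ linear section, realised as $H_1\cap H_2$ with $H_i:\sum_j c^{(i)}_jx_j=0$. A point of $g_\theta$ lies in $H_i$ exactly when $s f_i(\theta)+t g_i(\theta)=0$, where $f_i(\theta)=c^{(i)}_0+c^{(i)}_1\theta+c^{(i)}_2\theta^2$ and $g_i(\theta)=c^{(i)}_3+c^{(i)}_4\theta+c^{(i)}_5\theta^2+c^{(i)}_6\theta^3$. A nontrivial $(s:t)$ exists precisely when
\[
f_1(\theta)g_2(\theta)-f_2(\theta)g_1(\theta)=0,
\]
a polynomial of degree at most $5$ with leading coefficient $c^{(1)}_2c^{(2)}_6-c^{(2)}_2c^{(1)}_6$, which is nonzero for generic $H_1,H_2$. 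Its $5$ roots over $\overline{\Fq}$ (simple for a generic section) each determine a unique point on the corresponding generator, so the section consists of $5$ points and $\V_0$ has order $5$; equivalently, a generic hyperplane section is a degree-$5$ rational curve, matching the classical scroll $S(2,3)$. Therefore $\V_0=\var$, a ruled quintic surface.

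The routine parts are the transitivity chase in the first paragraph and the disjointness of the generators. I expect the main obstacle to be making the order computation fully rigorous over $\Fq$: verifying that the resultant has degree exactly $5$ with simple roots for a generic section, so that the $5$ intersection points are genuinely distinct and each lies on a single generator not wholly contained in $H_1\cap H_2$, and treating the parameter value $\theta=\infty$ together with the points at infinity of each generator consistently. This is cleanest to carry out by homogenising the parameter $\theta$ and working over $\overline{\Fq}$ from the outset.
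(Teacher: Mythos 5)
Your proposal is correct, and the reduction to a single normal form (transitivity on disjoint plane/$3$-space pairs, then on conics and twisted cubics within them, then using the stabiliser of the twisted cubic to absorb $\phi$) is essentially the paper's argument for projective equivalence, just spelled out slightly more explicitly. Where you genuinely diverge is in the computation of the order. The paper parametrises $\V$ by a plane $\pi$ of $\PG(3,q)$ via $\sigma:(x,y,z,0)\mapsto(x^3,x^2y,xy^2,y^3,x^2z,xyz,y^2z)$, pulls a generic codimension-$2$ section back to two plane cubics each having a double point at the base point $(0,0,1,0)$ of $\sigma$, and gets $9-4=5$ residual intersections by B\'ezout; you instead work fibrewise on the ruling and reduce the section to the vanishing of the resultant $f_1g_2-f_2g_1$, a degree-$5$ polynomial in the ruling parameter. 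Your route is more elementary and makes the degree $5=2+3$ visible directly from the bidegrees of the directrices, at the cost of the genericity checks you correctly flag (nonvanishing leading coefficient, simple roots, no generator contained in the section); the paper's route packages those issues into standard B\'ezout-with-multiplicity for plane cubics. One further difference worth noting: the paper explicitly exhibits $\V$ as the exact intersection of ten quadrics, both to justify calling it a variety and because later proofs (uniqueness of the conic directrix, the classification of $5$-space sections) lean on those defining quadrics; your argument instead gets closedness and irreducibility implicitly from the image of the irreducible incidence correspondence, which suffices for this theorem but does not supply the explicit equations the rest of the paper reuses.
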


\begin{proof} Let $\V$ be a scroll of $\PG(6,q)$ with conic directrix $\C$ in a plane $\alpha$,  twisted cubic directrix $\N_3$ in a 3-space $\Pi_3$, and ruled by a projectivity as described in Section 1. 
The group of collineations of $\PG(6,q)$ is transitive on planes; and transitive on 3-spaces. Further, all non-degenerate conics in a projective plane are projectively equivalent; and all twisted cubics in a 3-space are projectively equivalent. Hence without loss of generality, we can coordinatise $\V$ as follows. Let $\alpha$ be the plane which is the intersection of the four hyperplanes
$ x_0=0,\ x_1=0,\ x_2=0,\ x_3=0.$
Let $\C$ be the non-degenerate conic in $\alpha$ with points $C_\theta=(0,0,0,0,1,\theta,\theta^2)$ for $\theta\in\mathbb F_q\cup\{\infty\}$. Note that the points of $\C$ are the exact intersection of $\alpha$ with the  quadric of equation $x_5^2=x_4x_6.$
Let $\Pi_3$ be the 3-space which is the intersection of the  three hyperplanes
$ x_4=0,\ x_5=0,\ x_6=0.$
Let $\N_3$ be the twisted cubic in $\Pi_3$ with points $N_\theta=(1,\theta,\theta^2,\theta^3,0,0,0)$ for $\theta\in\mathbb F_q\cup\{\infty\}$. Note that the points of $\N_3$ are the exact intersection of $\Pi_3$ with the three quadrics with equations $x_1^2=x_0x_2,\  x_2^2=x_1x_3, \  x_0x_3=x_1x_2.$
A projectivity in $\PGL(2,q)$ is uniquely determined by the image of three points, so 
without loss of generality, let $\V$ have generator lines  $\ell_\theta=\{V_{\theta,t}=N_\theta+ t C_\theta,\ t\in\mathbb F_q\cup\{\infty\}\}$ for $\theta\in\Fq\cup\{\infty\}$.
That is, $V_{\theta,t}=(1,\theta,\theta^2,\theta^3,t,t\theta,t\theta^2)$. Equivalently, $\V$ consists of the points $V_{x,y,z}=(x^3,\ x^2y,\ xy^2,\ y^3,\ zx^2,\ zxy,\ zy^2)$ for $ x,y\in\Fq$, not both 0, $z\in\Fq\cup\{\infty\}.$
It is straightforward to verify that the pointset of $\V$ is  the exact intersection of the following ten quadrics,
 $
     x_{0}x_{5} =x_{1}x_{4},\ 
    x_0x_6= x_1x_5=x_2x_4,\ 
    x_1x_6=x_2x_5=x_3x_4,\ 
    x_2x_6=x_3x_5,\ 
    x_1^2=x_0x_2,\ 
    x_2^2=x_1x_3,\ 
    x_5^2=x_4x_6,\ 
    x_0x_3=x_1x_2.
    $
Hence the points of $\V$ form a variety. 

We follow \cite{semple} to calculate the dimension and order of $\V$.
The following map  defines an algebraic one to one  correspondence between  the plane $\pi$ of $PG(3,q)$ with points $(x,y,z,0)$,  $x,y,z\in\Fq$ not all 0, and the points of $\V$. $$\begin{array}{rccl}
\sigma : &\pi&\longrightarrow&\V\\
                &           (x,y,z,0) & \longmapsto &  (x^3,x^2y,xy^2,y^3,x^2z,xyz,y^2z).
\end{array}$$
Thus $\V$ is an absolutely
irreducible variety of dimension two and so we are justified
in calling it a surface.
Now consider a generic 4-space of $\PG(6,q)$ with equation given by the two hyperplanes 
$\Sigma_1: a_0x_0+\cdots+a_6x_6=0$ and  
$\Sigma_2: b_0x_0+\cdots+b_6x_6=0$, $a_i,b_i\in\Fq$. 
The point $V_{x,y,z}=(x^3,x^2y,xy^2,y^3,x^2z,xyz,y^2z)$ lies on $\Sigma_1$ if $a_0x^3+a_1x^2y+ a_2xy^2+a_3y^3+a_4x^2z+a_5xyz+a_6y^2z=0$. This  corresponds to a cubic $\K$ in the plane  $\pi$, moreover $\K$   contains the point $P=(0,0,1,0)$, and $P$ is a double point of $\K$. 
 Similarly  $V_{x,y,z}\in\Sigma_2$ corresponds to a cubic in $\pi$ with a double point $(0,0,1,0)$. Two cubics in a plane meet generically in nine points. As $(0,0,1,0)$ lies in the kernel of $\sigma$,  in $\PG(6,q)$ the 4-space $\Sigma_1\cap\Sigma_2$ meets $\V$ in five points, and so $\V$ has order 5.
 \end{proof}

\begin{theorem}\Label{x-gen} Let $\var$ be a ruled quintic surface in $\PG(6,q)$. 
\begin{enumerate}
\item No two generators of $\var$ lie in a plane.
\item No three generators of $\var$ lie in a 4-space.
\item No four generators of $\var$  lie in a 5-space.
\end{enumerate}
\end{theorem}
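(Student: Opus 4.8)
The plan is to exploit the fact that each generator $\ell_\theta$ is spanned by the point $N_\theta\in\Pi_3$ and the point $C_\theta\in\alpha$, together with the hypothesis $\alpha\cap\Pi_3=\emptyset$. Since the underlying vector spaces of $\alpha$ and $\Pi_3$ meet only in the zero vector, the span in $\PG(6,q)$ of any collection of generators $\ell_{\theta_1},\dots,\ell_{\theta_k}$ (with the $\theta_i$ distinct) splits as the join of its $\Pi_3$-part and its $\alpha$-part. Writing $a$ for the projective dimension of $\langle N_{\theta_1},\dots,N_{\theta_k}\rangle\subseteq\Pi_3$ and $b$ for that of $\langle C_{\theta_1},\dots,C_{\theta_k}\rangle\subseteq\alpha$, this splitting gives the span of the $k$ generators dimension exactly $a+b+1$. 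Thus each part of the theorem reduces to computing $a$ and $b$.

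To evaluate $a$ and $b$ I would invoke the general-position property of normal rational curves (see \cite{HT}): on the twisted cubic $\N_3$, a normal rational curve of degree $3$ in the $3$-space $\Pi_3$, any $\min(k,4)$ distinct points are linearly independent, so $k\le 4$ distinct points $N_{\theta_i}$ span a $(k-1)$-space; on the conic $\C$, a normal rational curve of degree $2$ in the plane $\alpha$, any $\min(k,3)$ distinct points are independent, but $\C$ lies in $\alpha$, so $k$ distinct points $C_{\theta_i}$ span a space of dimension $\min(k-1,2)$. The parameter value $\infty$ causes no difficulty, as the points $N_\infty$ and $C_\infty$ lie on the respective curves.

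Putting these together settles all three parts by a dimension count. For two generators ($k=2$) one gets $a=1$ and $b=1$, so the span has dimension $1+1+1=3>2$, proving (1); for three generators ($k=3$) one gets $a=2$ and $b=2$, so the span has dimension $2+2+1=5>4$, proving (2); and for four generators ($k=4$) one gets $a=3$ while $b$ saturates at $2$, so the span has dimension $3+2+1=6$, i.e.\ the four generators in fact span the whole of $\PG(6,q)$ and in particular cannot lie in a $5$-space, proving (3).

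The only step requiring genuine care is the splitting of the span into its $\Pi_3$-part and $\alpha$-part, which relies crucially on $\alpha$ and $\Pi_3$ being skew; once that is in place the three assertions follow immediately by reading off the dimensions of the secant spaces of the two directrices. The mild asymmetry to note is the conic's span saturating at dimension $2$ in the case $k=4$, but the twisted cubic supplies enough additional dimension that the conclusion still holds, and indeed with the stronger consequence that any four generators are linearly independent.
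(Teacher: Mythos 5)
Your proof is correct, but it follows a different route from the paper's. You compute directly: since $\alpha$ and $\Pi_3$ are skew, the span of $k$ generators is the join of the span of the $k$ corresponding points of $\N_3$ and the span of the $k$ corresponding points of $\C$, so its dimension is $\min(k-1,3)+\min(k-1,2)+1$ by the general-position property of normal rational curves, giving $3,5,6$ for $k=2,3,4$ and settling all three parts at once. The paper instead argues each part by contradiction: for part 1 it observes that two coplanar generators would force a line of $\alpha$ and a line of $\Pi_3$ to meet; for parts 2 and 3 it shows that a low-dimensional space containing the given generators would combine with $\Pi_3$ (and $\alpha$) to put all of $\var$ inside a $5$-space, contradicting the fact that $\var$ spans $\PG(6,q)$. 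Both arguments ultimately rest on the same two ingredients (the skewness of $\alpha$ and $\Pi_3$, and the independence of small point sets on the two directrices), but yours is more uniform and yields the sharper conclusion that $k$ generators span a space of dimension exactly $2k-1$ for $k\le 3$ and all of $\PG(6,q)$ for $k=4$, whereas the paper's version avoids the explicit Grassmann dimension computation for the join and leans on the already-established fact that $\var$ is not contained in a hyperplane. One small point of care, which you do address: the splitting of the span into its $\alpha$-part and $\Pi_3$-part needs the two directrix spaces to be disjoint, which is part of the definition of the scroll.
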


\begin{proof} Let $\var$ be a ruled quintic surface of $\PG(6,q)$ with conic directrix $\C$ in a plane $\alpha$, and twisted cubic directrix  $\N_3$ lying in a 3-space $\Pi_3$. 
Suppose two generator lines $\ell_0,\ell_1$ of $\var$ lie in a plane.
Let $m$ be the line in $\alpha$ joining the distinct points $\ell_0\cap\alpha$, $\ell_1\cap\alpha$. Let  $m'$ be the line in $\Pi_3$ joining the distinct points $\ell_0\cap\Pi_3$, $\ell_1\cap\Pi_3$. The lines $m,m'$ lie in the plane $\langle\ell_0,\ell_1\rangle$ and so meet in a point, contradicting $\alpha$, $\Pi_3$ being disjoint. 
Hence the generator lines of $\var$ are pairwise skew.

For part 2, suppose a 4-space $\Pi_4$ contains three distinct generators of $\var$. As distinct generators meet $\C$ in distinct points, 
 $\Pi_4$ contains three distinct points of $\C$, and so contains the plane $\alpha$. Further, distinct generators meet $\N_3$ in distinct points, hence $\Pi_4$ contains three points of $\N_3$, and so $\Pi_4\cap\Pi_3$ has dimension at least two. Hence $\langle\Pi_4,\Pi_3\rangle$  has dimension at most $4+3-2=5$.
 However,  $\var\subseteq \langle\Pi_4,\Pi_3\rangle$, a contradiction as $\var$ is not contained in a 5-space. 

For part 3, suppose a 5-space $\Pi_5$ contains   four distinct generators of $\var$. Distinct generators meet $\Pi_3$ in distinct points of $\N_3$, so $\Pi_5$ contains four points of $\N_3$, which do not lie in a plane. Hence $\Pi_5$ contains $\Pi_3$. Similarly $\Pi_5$ contains $\alpha$, and so $\Pi_5$ contains $\var$,  a contradiction as $\var$ is not contained in a 5-space.
\end{proof}

\begin{corollary}\Label{2-gen-alpha}
 No two generators of $\var$ lie in a 3-space containing $\alpha$. 
\end{corollary}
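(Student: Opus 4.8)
The plan is to argue by contradiction, reducing the statement to the fact established in Section~\ref{sec-simple} that $\var$ spans $\PG(6,q)$, equivalently that $\langle\alpha,\Pi_3\rangle=\PG(6,q)$. Suppose two generators $\ell_0,\ell_1$ of $\var$ both lie in a $3$-space $\Sigma$ with $\alpha\subseteq\Sigma$.

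First I would record the two incidences the hypothesis forces. Each generator meets the twisted cubic directrix in a point of $\N_3$; write $N_0=\ell_0\cap\Pi_3$ and $N_1=\ell_1\cap\Pi_3$. By Theorem~\ref{x-gen}(1) distinct generators are skew, and since distinct generators meet $\N_3$ in distinct points (as already used in the proof of Theorem~\ref{x-gen}), the points $N_0,N_1$ are distinct. Because $\ell_0,\ell_1\subseteq\Sigma$, both $N_0$ and $N_1$ lie in $\Sigma\cap\Pi_3$, so this intersection contains the line $N_0N_1$ and hence $\dim(\Sigma\cap\Pi_3)\geq1$.

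The key step is then a dimension count on the join $\langle\Sigma,\Pi_3\rangle$. By the dimension formula,
$$\dim\langle\Sigma,\Pi_3\rangle=\dim\Sigma+\dim\Pi_3-\dim(\Sigma\cap\Pi_3)\leq 3+3-1=5.$$
On the other hand $\langle\Sigma,\Pi_3\rangle\supseteq\langle\alpha,\Pi_3\rangle$, and since $\alpha$ (a plane) and $\Pi_3$ (a $3$-space) are disjoint, $\langle\alpha,\Pi_3\rangle$ has dimension $2+3+1=6$, i.e.\ equals $\PG(6,q)$. This forces $\dim\langle\Sigma,\Pi_3\rangle=6$, contradicting the bound of $5$, and completes the proof.

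I do not anticipate a serious obstacle here: the result is essentially a one-line consequence of the span computation once the correct join is chosen. The only points requiring care are checking $N_0\neq N_1$ (which uses the skewness of Theorem~\ref{x-gen}(1) rather than being automatic) and exploiting the disjointness of $\alpha$ and $\Pi_3$ both to guarantee $\langle\alpha,\Pi_3\rangle$ fills the space and to phrase the contradiction. An equivalent route, should one prefer to avoid the join formula, is to observe that a $3$-space containing $\alpha$ and $N_0$ is exactly $\langle\alpha,N_0\rangle$, to show $\langle\alpha,N_0\rangle\cap\Pi_3=\{N_0\}$ (again from $\langle\alpha,\Pi_3\rangle=\PG(6,q)$), and to conclude $N_1=N_0$, the same contradiction.
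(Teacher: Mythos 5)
Your argument is correct, but it takes a different route from the paper's. The paper proves the corollary by reduction to Theorem~\ref{x-gen}(2): assuming a $3$-space contains $\alpha$ and two generators, it adjoins one further point $P$ of $\var$ to obtain a $4$-space that must contain the whole generator through $P$ (since it already contains $\ell\cap\C\subseteq\alpha$), hence three generators in a $4$-space, which is forbidden. You instead run a direct Grassmann dimension count: the hypothesized $3$-space $\Sigma\supseteq\alpha$ meets the directrix $3$-space $\Pi_3$ in at least a line (through the two distinct points $N_0,N_1$), so $\langle\Sigma,\Pi_3\rangle$ has dimension at most $5$, yet it contains $\langle\alpha,\Pi_3\rangle=\PG(6,q)$. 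Both proofs are sound; in fact your computation is essentially the one the paper carries out \emph{inside} the proof of Theorem~\ref{x-gen}(2) (where $\langle\Pi_4,\Pi_3\rangle$ is bounded by $5$ and compared with the span of $\var$), so you have in effect inlined that lemma rather than citing it. What the paper's version buys is brevity given the earlier theorem; what yours buys is self-containment, needing only that $\alpha$ and $\Pi_3$ are disjoint and that distinct generators meet $\N_3$ in distinct points (which is immediate from the construction via the bijection $\phi$, so your appeal to skewness is more than is needed). One small notational caution: the paper's own proof reuses $\Pi_3$ for the hypothesized $3$-space containing $\alpha$, clashing with the directrix's $3$-space; your choice of $\Sigma$ avoids that.
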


\begin{proof} Suppose a 3-space $\Pi_3$ contained $\alpha$ and two generators of $\var$. Let $P$ be a point of $\var$ not in $\Pi_3$, and let $\ell$ be the generator of $\var$ through $P$.  Then $\Pi_4=\langle \Pi_3,P\rangle$ contains two distinct points of $\ell$, namely $P$ and $\ell\cap\C$, and so $\Pi_4$ contains $\ell$. That is, $\Pi_4$   is a 4-space containing three generators, contradicting Theorem~\ref{x-gen}.
\end{proof}

We now show that the only lines on $\var$ are the generators, and the only non-degenerate conic on $\var$ is the conic directrix. We show later in Theorem~\ref{tc-direct} that there are exactly $q^2$ twisted cubics on $\var$, and that each is a directrix.

\begin{theorem}\Label{line-rqs}
Let $\var$  be a ruled quintic surface in $\PG(6,q)$. A line of $\PG(6,q)$ meets $\var$ in $0$, $1$, $2$ or $q+1$ points. Further, $\var$ contains exactly $q+1$ lines, namely the generator lines.
\end{theorem}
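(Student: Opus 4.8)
The plan is to prove the two assertions separately, both by exploiting the concrete description of $\var$ established in Theorem~\ref{the-nine-quad} and Theorem~\ref{x-gen}: that $\var$ is the common zero locus of the ten listed quadrics, that it is the union of its $q+1$ pairwise skew generators (Theorem~\ref{x-gen}(1)), and that it meets the plane $\alpha$ exactly in $\C$ and the $3$-space $\Pi_3$ exactly in $\N_3$.

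For the intersection sizes, let $m$ be a line of $\PG(6,q)$. If $m\subseteq\var$ then $m$ meets $\var$ in all $q+1$ of its points. Otherwise $m$ fails to lie on at least one of the ten defining quadrics, say $Q$; restricting the quadratic form of $Q$ to $m$ gives a nonzero homogeneous quadratic in the line parameter, which has at most two zeros. Since $\var\subseteq Q$, we get $|m\cap\var|\le|m\cap Q|\le 2$. Hence the only possibilities are $0,1,2$ or $q+1$. This part is routine.

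The substantive part is showing the only lines on $\var$ are the generators; the main tool is the projection $\rho$ of $\PG(6,q)$ from $\Pi_3$ onto $\alpha$ (the two are disjoint and together span $\PG(6,q)$). The key observation is that $\rho$ collapses each generator: since $\ell_\theta$ meets $\Pi_3$ in $N_\theta$, every point of $\ell_\theta\setminus\{N_\theta\}$ is sent to the single point $\langle\Pi_3,\ell_\theta\rangle\cap\alpha=C_\theta$. As the generators are pairwise skew and partition $\var$, the map $\rho$ restricted to $\var\setminus\N_3$ is exactly the map sending a point to the conic point of its generator, and its fibres are precisely the generators (minus their point on $\N_3$). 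Now let $m\subseteq\var$ be a line. First exclude $m\subseteq\Pi_3$ and $m\subseteq\alpha$, which would force $m\subseteq\N_3$ or $m\subseteq\C$, impossible since neither a twisted cubic nor a conic has three collinear points. Thus $m\cap\Pi_3$ is either empty or a single point. If $m\cap\Pi_3=\emptyset$, then $\rho$ is injective on $m$ (otherwise $m$ would lie in a $4$-space through $\Pi_3$, contradicting $\dim\langle\Pi_3,m\rangle=5$), so its image is $q+1$ distinct points of $\C$ lying on the line $\langle\Pi_3,m\rangle\cap\alpha$; but a line meets $\C$ in at most two points and $q+1>2$, a contradiction. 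If $m\cap\Pi_3=\{R\}$, then $\langle\Pi_3,m\rangle$ is a $4$-space meeting $\alpha$ in a single point $C_\theta$, so all $q$ points of $m\setminus\{R\}\subseteq\var\setminus\N_3$ lie in the fibre of $\rho$ over $C_\theta$, i.e.\ on $\ell_\theta$; since $q\ge 2$ this forces $m=\ell_\theta$. Either way $m$ is a generator, so $\var$ contains exactly the $q+1$ generators.

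I expect the main obstacle to be setting up the projection cleanly and tracking the dimension counts that distinguish the two cases: verifying that a line disjoint from $\Pi_3$ projects injectively onto a genuine line of $\alpha$ (forcing the impossible $q+1$ collinear conic points), whereas a line meeting $\Pi_3$ in a point collapses to a single conic point (forcing it into one generator). Once the fibre structure of $\rho$ over $\var\setminus\N_3$ is pinned down, the rest is bookkeeping with the formula $\dim\langle U,W\rangle=\dim U+\dim W-\dim(U\cap W)$, using that $\Pi_3$ and $\alpha$ span all of $\PG(6,q)$.
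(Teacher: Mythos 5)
Your proof is correct, but it takes a genuinely different route from the paper's. The paper handles both claims in one stroke: it supposes a non-generator line $m$ meets $\var$ in three points $P,Q,R$, notes these lie on three distinct generators, and splits according to whether one of them lies on $\C$; the span $\langle\alpha,m\rangle$ then either contains three generators (contradicting Theorem~\ref{x-gen}) or forces $\var$ into a 5-space. You instead split the statement in two. For the intersection sizes you use that $\var$ is the exact intersection of the ten quadrics from Theorem~\ref{the-nine-quad}: a line not contained in $\var$ fails to lie on some quadric and so meets $\var$ in at most two points. This is shorter than the paper's argument and needs none of the incidence machinery. For the classification of the lines on $\var$ you project from $\Pi_3$ onto $\alpha$ and observe that the fibres over the points of $\C$ are exactly the generators minus their $\N_3$-points; a line of $\var$ either projects injectively onto $q+1>2$ collinear points of $\C$ (impossible) or collapses into a single fibre and hence equals a generator. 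Both halves are sound; the one loose end is your attribution of $\var\cap\alpha=\C$ and $\var\cap\Pi_3=\N_3$ to Theorems~\ref{the-nine-quad} and~\ref{x-gen} --- neither theorem states this, though it is immediate from the construction, since $\alpha\cap\Pi_3=\emptyset$ forces each generator to meet $\alpha$ only in its conic point and $\Pi_3$ only in its twisted-cubic point. The paper's single argument buys economy and stays inside the synthetic framework it reuses later; your quadric argument gives the at-most-two bound with no case analysis, and the projection makes the fibre structure of $\var$ over $\C$ explicit, which is a useful picture in its own right.
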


\begin{proof}
  Let $\var$ be a ruled quintic surface of $\PG(6,q)$  with conic directrix $\C$ lying in a plane $\alpha$, and twisted cubic directrix $\N_3$ lying in the 3-space $\Pi_3$. Let $m$ be a line of $\PG(6,q)$ that is not a generator of $\var$, and 
suppose $m$ meets $\var$ in three points $P,Q,R$. As $m$ is not a generator of $\var$, the points  $P,Q,R$ lie on distinct generator lines denoted $\ell_P,\ell_Q,\ell_R$ respectively. 
As $\C$ is a non-degenerate conic, $m$ is not a line of $\alpha$ and so at most one of the points $P,Q,R$ lie in $\C$. 
Suppose firstly that $P,Q,R\notin\C$. Then $\langle\alpha,m\rangle$ is a 3- or 4-space that contains the three generators $\ell_P,\ell_Q,\ell_R$, contradicting Theorem~\ref{x-gen}. 
Now suppose $P\in\C$ and $Q,R\notin\C$. Then $\Sigma_3=\langle \alpha,m\rangle$ is a 3-space which contains the two generator lines $\ell_Q,\ell_R$. So $\Sigma_3\cap\Pi_3$ contains the distinct points $\ell_R\cap\N_3$, $\ell_Q\cap\N_3$, and so has dimension at least one. Hence $\langle\Sigma_3,\Pi_3\rangle$ has dimension at most $3+3-1=5$,   a contradiction as $\var\subset\langle\Sigma_3,\Pi_3\rangle$, but $\var$ is not contained in a 5-space. Hence a line of $\PG(6,q)$ is either a generator line of $\var$, or meets $\var$ in 0, 1 or 2 points. 
\end{proof}

\begin{theorem}\Label{conic-dir-unique}
The ruled quintic surface  $\var$ contains exactly one  non-degenerate conic. 
\end{theorem}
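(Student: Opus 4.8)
The plan is to prove that any non-degenerate conic $\mathscr{O}$ lying on $\var$ coincides with the conic directrix $\C$; since $\C$ is itself such a conic, this gives both existence and uniqueness. Let $\pi'$ denote the plane spanned by $\mathscr{O}$. Working in the coordinates of Theorem~\ref{the-nine-quad}, I would first record the observation that $\var\cap\alpha=\C$: a point of $\var$ has all of $x_0,x_1,x_2,x_3$ equal to zero exactly when it lies on $\C$, because each generator $\ell_\theta$ meets the plane $\alpha=\{x_0=x_1=x_2=x_3=0\}$ only in the point $C_\theta$. In particular the projection $\mu$ from $\alpha$ onto $\Pi_3$ that reads off the coordinates $(x_0,x_1,x_2,x_3)$ is defined at every point of $\var\setminus\C$, and it sends a point lying on the generator $\ell_\theta$ to the twisted cubic point $N_\theta\in\N_3$.

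The crucial step is to bound the number of points of $\mathscr{O}$ off $\C$. I claim the points of $\mathscr{O}\setminus\C$ lie on at most three generators. Indeed, if they met four distinct generators $\ell_{\theta_1},\dots,\ell_{\theta_4}$, choosing one point of $\mathscr{O}$ on each and applying $\mu$ would yield four distinct points $N_{\theta_1},\dots,N_{\theta_4}$ of $\N_3$; since any four points of a twisted cubic in a $3$-space are linearly independent, these four points of $\mathscr{O}$ would already span a $3$-space, contradicting $\mathscr{O}\subseteq\pi'$. Next, by Theorem~\ref{x-gen}(1) no two generators are coplanar, so at most one generator lies in $\pi'$. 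A generator not contained in $\pi'$ meets $\pi'$, and hence $\mathscr{O}$, in at most one point, while the (at most one) generator inside $\pi'$ meets the conic $\mathscr{O}$ in at most two points. Combining these bounds gives $|\mathscr{O}\setminus\C|\le 2+1+1=4$, so that $|\mathscr{O}\cap\C|\ge (q+1)-4=q-3\ge 3$ for $q\ge 6$.

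Finally I would compare the two planes. If $\pi'\ne\alpha$, then $\mathscr{O}\cap\C\subseteq\pi'\cap\alpha$; two distinct planes of $\PG(6,q)$ meet in at most a line, so $\mathscr{O}\cap\C$ is contained in a line, and as $\mathscr{O}$ is a non-degenerate conic this forces $|\mathscr{O}\cap\C|\le 2$, contradicting $|\mathscr{O}\cap\C|\ge 3$. Hence $\pi'=\alpha$, so $\mathscr{O}\subseteq\var\cap\alpha=\C$, and as both are conics with $q+1$ points we conclude $\mathscr{O}=\C$.

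The step I expect to be the main obstacle is securing a bound on $|\mathscr{O}\cap\C|$ that is strictly larger than $2$. The twisted-cubic independence property caps at three the number of generators carrying points of $\mathscr{O}\setminus\C$, but to turn this into a bound on the total number of such points one must also invoke the coplanarity restriction of Theorem~\ref{x-gen}(1), which limits to a single generator the possibility of contributing two intersection points. The remaining ingredient, the identity $\var\cap\alpha=\C$, is a routine verification in the given coordinates.
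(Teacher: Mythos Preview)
Your proof is correct and proceeds by a genuinely different route from the paper's. The paper first shows that any second non-degenerate conic $\C'\subset\var$ must contain exactly one point on each generator (using that $\var$ is cut out by quadrics, so no planar section can contain both an irreducible conic and a line), and then analyses the span $\langle\alpha,\alpha'\rangle$ case by case according to $\dim(\alpha\cap\alpha')$: in each case this span has dimension at most~$5$ yet would contain at least $q-1$ generators of $\var$, contradicting Theorem~\ref{x-gen}. You instead exploit the linear projection from $\alpha$ onto the twisted-cubic directrix $\N_3$: the four-point independence property of $\N_3$ forces the points of $\mathscr{O}\setminus\C$ onto at most three generators, and combined with Theorem~\ref{x-gen}(1) this caps $|\mathscr{O}\setminus\C|$ at~$4$, whence $|\mathscr{O}\cap\C|\ge 3$ forces $\pi'=\alpha$. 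Your route is slightly more elementary in that it avoids the quadric-intersection step entirely; the paper's argument, on the other hand, never uses the specific structure of the cubic directrix and would adapt more readily to scrolls whose second directrix is a higher-degree normal rational curve.
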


\begin{proof} Let $\var$ be a ruled quintic surface with conic directrix $\C$ in a plane $\alpha$. Suppose $\var$ contains another non-degenerate conic $\C'$ in a plane $\alpha'\neq\alpha$. If $\C'$ contains   two points on a generator $\ell$ of $\var$, then $\alpha'\cap\var$ contains $\C'$ and  $\ell$. 
 However, by the proof of Theorem~\ref{the-nine-quad}, $\var$ is the intersection of quadrics, and the configuration $\C'\cup\ell$ is not contained  in any planar quadric. Hence $\C'$ contains  exactly one point on each generator of $\var$. We consider the three cases where  $\alpha\cap\alpha'$ is either empty, a point or a line. 
 Suppose $\alpha\cap\alpha'=\emptyset$, then $\langle \alpha,\alpha'\rangle$ is a 5-space that contains $\C$ and $\C'$, and so contains two distinct points on each generator of $\var$. Hence  $\langle \alpha,\alpha'\rangle$ contains each generator of $\var$ and so contains $\var$, a contradiction as $\var$ is not contained in a 5-space. Suppose $\alpha\cap\alpha'$ is a point $P$, then  $\langle \alpha,\alpha'\rangle$ is a 4-space that contains at least $q$  generators of $\var$, contradicting Theorem~\ref{x-gen} as $q\geq 6$. Finally, suppose $\alpha\cap\alpha'$ is a line, then $\langle \alpha,\alpha'\rangle$ is a 3-space that contains at least $q-1$ generators, contradicting Theorem~\ref{x-gen}  as $q\geq 6$. So $\var$ contains exactly one non-degenerate conic. 
\end{proof}

We aim to classify how  5-spaces meet $\var$, so we begin with a simple description.

\begin{remark}\Label{remark-5} Let $\Pi_5$ be a 5-space, then $\Pi_5\cap\var$ contains a set of $q+1$ points, one on each generator. 
\end{remark}

\begin{lemma}\Label{5-space-quintic} 
A $5$-space meets $\var$ in either
(a) a 5-dim nrc,
(b)  a 4-dim nrc and 0 or 1 generators,
(c)  a 3-dim nrc and 0, 1 or 2  generators,
(d)   the conic directrix and 0, 1, 2 or 3 generators.
\end{lemma}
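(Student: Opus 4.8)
The plan is to exploit the explicit parametrisation of $\var$ from the proof of Theorem~\ref{the-nine-quad}. Writing a point of $\var$ homogeneously as $V=sN_{[\theta_0:\theta_1]}+tC_{[\theta_0:\theta_1]}$, where $[\theta_0:\theta_1]$ names the generator $\ell$ and $[s:t]$ the point on it, a $5$-space is a hyperplane $\sum a_ix_i=0$, and substituting $V$ reduces membership to the single condition $sF(\theta_0,\theta_1)+tG(\theta_0,\theta_1)=0$, where $F=a_0\theta_0^3+a_1\theta_0^2\theta_1+a_2\theta_0\theta_1^2+a_3\theta_1^3$ is a binary cubic and $G=a_4\theta_0^2+a_5\theta_0\theta_1+a_6\theta_1^2$ a binary quadratic. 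The key remark is that a generator $\ell$ lies wholly in $\Pi_5$ precisely when $[\theta_0:\theta_1]$ is a common zero of $F$ and $G$, while every other generator meets $\Pi_5$ in the single point $G\,N-F\,C$. Thus the intersection pattern is governed entirely by the pair $(F,G)$, and by Remark~\ref{remark-5} the residual point set already accounts for one point on each remaining generator.

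I would first settle the degenerate case $G\equiv0$, which is exactly the condition $\alpha\subseteq\Pi_5$ and hence $\C\subseteq\Pi_5$. Then every generator not contained in $\Pi_5$ meets it only in its point of $\C$, so the generators lying in $\Pi_5$ are precisely those whose point $N_{[\theta_0:\theta_1]}$ lies in $\Pi_3\cap\Pi_5$. Since $\alpha\subseteq\Pi_5$ while $\alpha$ and $\Pi_3$ together span $\PG(6,q)$, the $3$-space $\Pi_3$ cannot also lie in $\Pi_5$, so $\Pi_3\cap\Pi_5$ is a plane, which the twisted cubic $\N_3$ meets in at most three points. This yields case (d).

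For $G\not\equiv0$ the residual set is the image of the map $[\theta_0:\theta_1]\mapsto G\,N-F\,C$, whose seven coordinate forms are $G\theta_0^3,G\theta_0^2\theta_1,G\theta_0\theta_1^2,G\theta_1^3,-F\theta_0^2,-F\theta_0\theta_1,-F\theta_1^2$. Put $H=\gcd(F,G)$, of degree $c\le2$, and write $F=HF'$, $G=HG'$ with $F',G'$ coprime. Cancelling the common factor $H$ gives a parametrisation by binary forms of the common degree $5-c$, and the crux of the argument is to show these span the whole space $W_{5-c}$ of binary forms of degree $5-c$, which has dimension $6-c$. Letting $W_k$ denote the binary forms of degree $k$, the span is $G'W_3+F'W_2$, of dimension $4+3-\dim(G'W_3\cap F'W_2)$; coprimality of $F'$ and $G'$ forces a common element to be a multiple of $F'G'$, so $G'W_3\cap F'W_2=F'G'\,W_c$ has dimension $c+1$, whence the span has dimension $4+3-(c+1)=6-c$ and therefore equals $W_{5-c}$ (the case $F\equiv0$ is similar and returns the twisted cubic directrix itself). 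Since these forms span $W_{5-c}$, the coordinate map realises the residual curve as the image of the rational normal curve $[\theta_0:\theta_1]\mapsto(\theta_0^{5-c},\dots,\theta_1^{5-c})$ under a linear isomorphism onto its span, so it is a normal rational curve of degree $5-c$ spanning a $(5-c)$-space.

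It then remains to read off the cases from $c$: the value $c=0$ gives a $5$-dim nrc (a), $c=1$ a $4$-dim nrc (b), and $c=2$ a $3$-dim nrc (c). The number of generators of $\var$ lying in $\Pi_5$ is the number of distinct $\Fq$-rational zeros of $H$, which is at most $\deg H=c=5-d$, where $d$ is the dimension of the residual nrc; this produces the stated ranges ($0$ or $1$ generators in case (b), and $0$, $1$ or $2$ in case (c)), the smaller values arising exactly when $H$ has a repeated root or an irreducible quadratic factor. I expect the span computation of the third paragraph to be the main obstacle, as it is what simultaneously fixes the dimension of the residual curve and certifies that it is a genuine normal rational curve rather than a projection of one; the field-of-definition bookkeeping that distributes the generator counts is then routine.
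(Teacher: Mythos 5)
Your proof is correct, but it takes a genuinely different route from the paper's. The paper argues top--down from intersection theory: since $\var$ is a variety of order $5$ and dimension $2$ (Theorem~\ref{the-nine-quad}), a $5$-space cuts it in a curve of degree $5$; the possible partitions of $5$ into degrees of components are then whittled down using Remark~\ref{remark-5} (one point on each generator), Theorem~\ref{line-rqs} (the only lines on $\var$ are generators), Theorem~\ref{conic-dir-unique} (the only conic is the directrix) and Theorem~\ref{x-gen} (at most three generators in a $5$-space), with separate ad hoc arguments to rule out planar cubics, planar quartics and degenerate spans. You instead work bottom--up from the parametrisation: the hyperplane condition becomes $sF+tG=0$ for a binary cubic $F$ and binary quadratic $G$, the contained generators are the rational common zeros of $F$ and $G$, and the residual curve is exhibited directly as a normal rational curve of degree $5-\deg\gcd(F,G)$ via the span computation $\dim(G'W_3+F'W_2)=6-c$. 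Both arguments are sound; I checked your gcd/dimension count for $c=0,1,2$ and the degenerate cases $G\equiv 0$ (which is exactly $\alpha\subseteq\Pi_5$, giving case (d)) and $F\equiv 0$ (giving the twisted cubic directrix, case (c)), and the bookkeeping of rational zeros of $H$ matches the stated generator counts. What your approach buys: it is elementary and self-contained (no appeal to the Semple--Roth intersection formula, nor to Theorems~\ref{line-rqs} and~\ref{conic-dir-unique} as prerequisites), it certifies in one stroke that each residual component is a genuine nondegenerate nrc rather than a projection, and it yields sharper information for free --- e.g.\ that in case (b) there is always exactly one generator (since a linear $H$ has exactly one rational zero), a fact the paper only establishes later in Lemma~\ref{5contains4}. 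What the paper's approach buys is brevity modulo the cited machinery, and independence from the particular coordinatisation. One presentational point if you write this up: state explicitly that $F\equiv 0$ and $G\equiv 0$ cannot hold simultaneously (else all $a_i=0$), and that by Theorem~\ref{the-nine-quad} all ruled quintic surfaces are projectively equivalent, so the coordinate model is without loss of generality.
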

\begin{proof}
Using properties of varieties (see for example \cite{semple}) we have  $\var\cap\V^1_5=\V^5_1$, that is,
the variety $\var$ meets a 5-space $\V^1_5$ in a curve of degree five. Denote this curve of $\PG(6,q)$ by $\K$. The degree of $\K$ can be partitioned as $$5=4+1=3+2=3+1+1=2+2+1=2+1+1+1=1+1+1+1+1.$$ 
By Theorem~\ref{line-rqs}, the only lines on $\var$ are the generators. By Theorem~\ref{x-gen}, $\K$ does not contain more than 3 generators.  By Remark~\ref{remark-5}, $\K$ contains at least one point on each generator.  Hence $\K$ is not empty, and is not the union of 1, 2 or 3 generators, so the partition $1+1+1+1+1$ for the degree of $\K$ does not occur. 

Suppose that  the  degree of $\K$  is partitioned as either  (a) $2+2+1$ or (b) $2+1+1+1$. By  Remark~\ref{remark-5}, $\K$ contains a point on each generator, so $\K$ contains   an irreducible conic. By Theorem~\ref{conic-dir-unique}, this conic is the conic directrix $\C$ of $\var$, and case (a) does not occur. 
Hence $\K$ consists of $\C$ and 0, 1, 2 or 3 generators of $\var$.

Suppose that the  degree of $\K$ is partitioned as  $3+1+1$. So $\K$ consists of at most 2 generators, and an irreducible cubic $\K'$. 
By  Remark~\ref{remark-5}, $\K$ contains a point on each generator, so $\K'$ contains  a point on at least $q-1$ generators.
If $\K'$ generates a 3-space, then it is a 
 3-dim nrc of $\PG(6,q)$. If not, $\K'$ is an irreducible cubic  contained in a  plane $\Pi_2$.  By the proof of Theorem~\ref{the-nine-quad}, $\K'$ is contained in a quadric, so $\K'$ is not an irreducible planar cubic. Thus $\K'$ is a 3-dim nrc of $\PG(6,q)$. 
Hence $\K$  consists of  a 3-dim nrc  and  0, 1 or 2 generators of $\var$.

Suppose that  the  degree of $\K$  is partitioned as $2+3$.
By  Remark~\ref{remark-5}, $\K$ contains a point on each generator. As argued above, $\K$ does not contain an irreducible planar cubic. Suppose $\K$ contained both an irreducible conic $\C$ and a twisted cubic $\N_3$, then there is at least one generator $\ell$ that meet $\C$ and $\N_3$ in distinct points. In this case $\ell$ lies in the 5-space and so lies in $\K$, a contradiction. So $\K$ is not the union of an irreducible conic and a twisted cubic.

Suppose that the  degree of $\K$ is partitioned as  $4+1$. So $\K$ consists of at most 1 generator, and an irreducible quartic $\K'$.  By  Remark~\ref{remark-5}, $\K$ contains a point on each generator, so $\K'$ contains  a point on at least $q$ generators.
If $\K'$ generates a 4-space, then it is a 
 4-dim nrc of $\PG(6,q)$. If not, $\K'$ is an irreducible quartic contained in a  3-space $\Pi_3$.  Let $\ell,m$ be two  generators not in $\K$, then by Remark~\ref{remark-5} they meet $\K'$.  So $\langle\Pi_3,\ell,m\rangle$ has dimension at most 5, and meets $\var$ in a irreducible quartic and 2 lines, which is a curve of degree 6, a contradiction.   Thus $\K'$ is a 4-dim nrc of $\PG(6,q)$. 
That is, $\K$  consists of  a 4-dim nrc  and  0 or 1 generators of $\var$.

Suppose the curve $\K$ is irreducible.  By  Remark~\ref{remark-5}, $\K$ contains a point on each generator. So either $\K$ is a 5-dim nrc of $\PG(6,q)$, or $\K$ lies in a 4-space.  Suppose $\K$ lies in a 4-space $\Pi_4$, and let $\ell$ be a generator, then $\langle\Pi_4,\ell\rangle$ has dimension at most 5  and  meets $\var$ in a curve of degree 6, a contradiction. So $\K$ is a 5-dim nrc of $\PG(6,q)$. 
 \end{proof}

\begin{corollary}\Label{3-4-no-gen}
Let $\Pi_r$ be an $r$-space, $r=3,4,5$ that contains an $r$-dim nrc of $\var$. Then $\Pi_r$ contains 0 generators of $\var$.
\end{corollary}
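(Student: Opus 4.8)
The plan is to argue by contradiction. Suppose the $r$-space $\Pi_r$ contains both the $r$-dim nrc, call it $\N$, and a generator $\ell$ of $\var$. I will enlarge $\Pi_r$ to a $5$-space that meets $\var$ in a curve of degree at least $6$, contradicting the fact---established in the proof of Lemma~\ref{5-space-quintic}---that every $5$-space meets $\var$ in a curve of degree exactly $5$.

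First I would assemble the elementary facts needed. As a normal rational curve of degree $r\ge 3$, the curve $\N$ is irreducible, is not a line, and has exactly $q+1$ points; moreover no three of its points are collinear, so each generator meets $\N$ in at most two points. I would also note, from Theorem~\ref{x-gen}, that any $3$-space or $4$-space contains at most two generators of $\var$: for a $4$-space this is part~2, and for a $3$-space it follows since a $4$-space containing it would otherwise contain three generators.

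The core construction is a chain $\Pi_r=\Sigma_r\subset\Sigma_{r+1}\subset\cdots\subset\Sigma_5$, where $\Sigma_{j+1}=\langle\Sigma_j,m\rangle$ for a generator $m$ that meets $\N$ but is not contained in $\Sigma_j$, so that $\dim\Sigma_{j+1}=j+1$. Such an $m$ exists at each step $j\in\{r,\dots,4\}$: the space $\Sigma_j$ (with $j\le 4$) contains at most two generators, which together pass through at most four points of $\N$, and since $q+1\ge 7$ some point of $\N$ lies on a generator not contained in $\Sigma_j$; adjoining that generator raises the dimension by exactly one. After $5-r$ steps I reach a $5$-space $\Sigma_5$ containing $\N$, the generator $\ell$, and the $5-r$ adjoined generators. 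These $6-r$ generators are pairwise distinct---each adjoined one lies outside the preceding space---and none is a component of the irreducible curve $\N$. Hence $\Sigma_5\cap\var$ contains $\N$ (of degree $r$) together with $6-r$ distinct lines, a subcurve of degree at least $r+(6-r)=6$, contradicting degree~$5$. When $r=5$ no generator is adjoined and the contradiction is immediate, since $\Sigma_5\cap\var\supseteq\N\cup\ell$ already has degree~$6$.

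The main obstacle I expect is the dimension bookkeeping within the chain: I must ensure that each adjoined generator genuinely raises the dimension by one---that is, that it meets the current space but is not contained in it---and that the collected generators are all distinct, so that their degrees truly add. Both are controlled by the bound ``at most two generators in a $3$- or $4$-space'' from Theorem~\ref{x-gen} together with the point count $q+1\ge 7$ on $\N$. It is worth stressing that the final contradiction comes from the degree count, not from Theorem~\ref{x-gen}: for $r=3$ the space $\Sigma_5$ legitimately contains three generators, but a quintic section cannot also contain a twisted cubic.
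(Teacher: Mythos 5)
Your proof is correct, and it runs in the opposite direction to the paper's. The paper deduces the corollary top-down from the classification in Lemma~\ref{5-space-quintic}: a $5$-space containing an $r$-dim nrc of $\var$ contains at most $5-r$ generators, hence a $4$-space containing it contains at most $4-r$, and so on down to zero at dimension $r$ (the descending ``Hence'' steps conceal exactly the adjoin-a-generator-and-raise-the-dimension move that you perform explicitly). You instead argue bottom-up: assuming a generator inside $\Pi_r$, you build a chain up to a $5$-space whose section of $\var$ has degree at least $6$, contradicting the degree-$5$ fact from the proof of Lemma~\ref{5-space-quintic} rather than its stated classification. The two arguments are essentially contrapositives of one another, but yours buys some independence: it needs only the raw degree bound, Theorem~\ref{x-gen}, and the fact that a line meets an nrc of degree at least $3$ in at most two points, and it correctly avoids the circularity of invoking Theorem~\ref{one-pt-gen}. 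It also makes explicit the two points the paper elides --- that each adjoined generator meets the current space in exactly one point of $\N$ (so the dimension rises by exactly one), and that for $r=3$ the final $5$-space legitimately contains three generators, so the contradiction must come from the degree count and not from Theorem~\ref{x-gen}. The cost is length; the paper's version is shorter because Lemma~\ref{5-space-quintic} has already absorbed the case analysis.
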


\begin{proof}
First suppose $r=3$, by Lemma~\ref{5-space-quintic}, a 5-space containing a twisted cubic $\N_3$ of $\var$ contains at most two generators  of $\var$. Hence a 4-space containing $\N_3$ contains at most one generator of $\var$. Hence the 3-space $\Pi_3$ containing $\N_3$ contains no generator of $\var$. 
If $r=4$, by Lemma~\ref{5-space-quintic}, a 5-space containing a 4-dim nrc $\N_4$ of $\var$ contains at most one generator  of $\var$. Hence the 4-space $\Pi_4$ containing $\N_4$ contains no generators of $\var$. If $r=5$, then by Lemma~\ref{5-space-quintic}, $\Pi_5$ contains 0 generators of $\var$.
\end{proof}

\begin{theorem}\Label{one-pt-gen} Let $\N_r$ be an $r$-dim nrc lying on $\var$, $r=3,4,5$. Then 
$\N_r$ contains exactly  one point on each generator of $\var$.
\end{theorem}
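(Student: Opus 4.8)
The plan is to turn the statement into a counting argument based on a single incidence map between the points of $\N_r$ and the generators of $\var$, using Corollary~\ref{3-4-no-gen} as the essential input. The key structural fact I would invoke first is that $\var$ is, by definition, the union of its $q+1$ generators, and that by Theorem~\ref{x-gen} these generators are pairwise skew. Consequently every point of $\var$ lies on exactly one generator. Restricting this to the nrc, which lies on $\var$, I obtain a well-defined map
$f$ sending each of the $q+1$ points of $\N_r$ to the unique generator of $\var$ passing through it.

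The heart of the argument is to show that $f$ is injective. Suppose two distinct points $P,Q$ of $\N_r$ were sent to the same generator $\ell$, so that $P,Q\in\ell$. Writing $\Pi_r=\langle\N_r\rangle$ for the $r$-space spanned by the $r$-dim nrc, both $P$ and $Q$ lie in $\Pi_r$, hence the line $\ell=\langle P,Q\rangle$ is contained in $\Pi_r$. But then the $r$-space $\Pi_r$ would contain a generator of $\var$, directly contradicting Corollary~\ref{3-4-no-gen}. Therefore no two points of $\N_r$ share a generator, i.e. $f$ is injective. (One could alternatively note that no three points of a normal rational curve are collinear, so a priori each generator meets $\N_r$ in at most two points; Corollary~\ref{3-4-no-gen} is what sharpens this bound to at most one.)

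To finish, I would simply observe that $f$ is an injection between two finite sets each of cardinality $q+1$ — the $q+1$ points of $\N_r$ and the $q+1$ generators — and is therefore a bijection. Injectivity gives that each generator carries at most one point of $\N_r$, while surjectivity gives at least one; together these say each generator contains exactly one point of $\N_r$, which is the claim.

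I do not expect a serious obstacle here: the proof is genuinely short because the real work has been front-loaded into Corollary~\ref{3-4-no-gen}. The only point requiring care is the well-definedness of $f$, which depends on the generators being pairwise disjoint and covering $\var$; both facts are already established, so the argument reduces to the injectivity step plus an equal-cardinality counting step.
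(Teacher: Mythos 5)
Your proof is correct and follows essentially the same route as the paper's: both arguments observe that if a generator met the span $\Pi_r$ of $\N_r$ in two points it would lie entirely in $\Pi_r$, contradicting Corollary~\ref{3-4-no-gen}, and then conclude by counting the $q+1$ points of $\N_r$ against the $q+1$ pairwise skew generators covering $\var$. You have merely made explicit the well-definedness and pigeonhole steps that the paper leaves implicit.
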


\begin{proof} Let $\N_r$ be an $r$-dim nrc lying on $\var$, $r=3,4,5$, and denote the $r$-space containing $\N_r$ by $\Pi_r$. 
If $\Pi_r$ contained 2 points of a generator of $\var$, then it contains the whole generator, so by Corollary~\ref{3-4-no-gen}, the $q+1$ points of $\N_r$ are one on each generator of $\var$. 
 \end{proof}

\section{$\var$ and  $\Fq$-subplanes of $\PG(2,q^3)$}\Label{sec-BB}

To study $\var$ in more detail, we use the linear representation of $\PG(2,q^3)$ in $\PG(6,q)$ developed independently by Andr\'e and Bruck and Bose in \cite{BB0, BB1, BB2}. Let $\S$ be a regular 2-spread of $\PG(6,q)$ in a 5-space $\si$. Let $\mathscr I$ be the incidence structure with: {\sl points} the points of $\PG(6,q)\setminus\si$;  {\sl lines} the 3-spaces of $\PG(6,q)$ that contain a plane of $\S$ and  are not in $\si$; and {\sl incidence} is inclusion. Then  $\mathscr I$   is isomorphic to $\AG(2,q^3)$. We can uniquely complete  $\mathscr I$  to $\PG(2,q^3)$, the points on $\li$ correspond to the planes of $\S$. We call this the {\em Bruck-Bose representation} of $\PG(2,q^3)$ in $\PG(6,q)$, see \cite{FFA} for a detailed discussion on this representation. 
Of particular interest is the relationship between the ruled quintic surface of $\PG(6,q)$ and the $\Fq$-subplanes of $\PG(2,q^3)$. 

To describe this relationship, we 
 need to use the cubic extension of $\PG(6,q)$ to $\PG(6,q^3)$. The regular 2-spread $\S$ has a unique set of three conjugate {\em transversal} lines in this cubic extension, denoted $g,g^q,g^{q^2}$, which meet each extended plane of $\S$, for more details on regular spreads and transversals, see \cite[Section 25.6]{HT}. 
 An $r$-space $\Pi_r$ of $\PG(6,q)$ lies in a unique $r$-space of $\PG(6,q^3)$, denoted $\Pi_r^\star$. 
A nrc $\N$ of   $\PG(6,q)$ lies in a unique nrc of $\PG(6,q^3)$, denoted $\N^\star$. Let $\var$ be a ruled quintic surface with conic directrix $\C$, twisted cubic directrix $\N_3$, and associated projectivity $\phi$. Then 
 we can extend $\var$ to a unique ruled quintic surface $\var^\star$ of $\PG(6,q^3)$
 with conic directrix $\C^\star$, twisted cubic directrix $\N_3^\star$, with the same associated projectivity, that is, extend $\phi$ from acting on $\PG(1,q)$ to acting on $\PG(1,q^3)$. We need the following characterisations. 
 
 \begin{result}\cite{FFA,iff} \Label{FFA-result}
 Let $\S$ be a regular 2-spread in a $5$-space $\si$ in $\PG(6,q)$ and consider the Bruck-Bose plane $\PG(2,q^3)$. 
\begin{enumerate}
\item An $\Fq$-subline  of $\PG(2,q^3)$  that meets $\li$ in a point corresponds in $\PG(6,q)$  to a line not in $\si$. 
\item An $\Fq$-subline   of $\PG(2,q^3)$ that is disjoint from $\li$  corresponds  in $\PG(6,q)$ to a  twisted cubic $\N_3$ lying in a 3-space about a plane of $\S$,  such that the  extension $\N_3^\star$ to $\PG(6,q^3)$ meets each transversal of $\S$  in a point.
\item An $\Fq$-subplane of $\PG(2,q^3)$  tangent to $\li$ at the point $T$ corresponds  in $\PG(6,q)$ to a ruled quintic surface $\var$ with conic directrix in the spread plane corresponding to $T$, such that in the cubic extension $\PG(6,q^3)$, the transversals $g,g^q,g^{q^2}$ of $\S$ are generators of ${\var}\star$.\end{enumerate}
Moreover, the converse of each is true.
\end{result}

We use this characterisation to show that $\var$ contains exactly $q^2$ twisted cubics.

\begin{theorem}\Label{tc-direct}
The ruled quintic surface  $\var$ contains exactly $q^2$ twisted cubics, each  is a directrix of $\var$. 
\end{theorem}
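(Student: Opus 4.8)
The plan is to use the Bruck-Bose characterization (Result~\ref{FFA-result}) to translate the question "how many twisted cubics lie on $\var$?" into a counting question inside the subplane $\pi_0$ of $\PG(2,q^3)$ that corresponds to $\var$. By part~3 of Result~\ref{FFA-result}, $\var$ is the Bruck-Bose image of an $\Fq$-subplane $\pi_0$ tangent to $\li$ at a point $T$. By part~2, a twisted cubic $\N_3$ lying on $\var$ (in a $3$-space about a spread plane, with the correct transversal behaviour) corresponds to an $\Fq$-subline of $\pi_0$ that is disjoint from $\li$. So I would first argue that the twisted cubics on $\var$ are in bijection with the $\Fq$-sublines of $\pi_0$ that miss $\li$, i.e.\ that miss the tangent point $T$. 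Since $\pi_0\cong\PG(2,q)$ and it meets $\li$ only at $T$, an $\Fq$-subline disjoint from $\li$ is exactly a line of $\pi_0$ not through $T$.

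\medskip
\textbf{Counting the sublines.} A subplane $\pi_0\cong\PG(2,q)$ has $q^2+q+1$ lines. The tangent point $T$ lies on exactly $q+1$ of these (the lines of $\pi_0$ through $T$). Hence the number of lines of $\pi_0$ missing $T$ is
\[
(q^2+q+1)-(q+1)=q^2.
\]
Each such line is an $\Fq$-subline disjoint from $\li$, and by Result~\ref{FFA-result}(2) corresponds to a twisted cubic on $\var$. This gives the count of exactly $q^2$ twisted cubics. I would need to check that the correspondence is genuinely a bijection: distinct $\Fq$-sublines give distinct point sets in $\PG(6,q)$ hence distinct twisted cubics, and conversely every twisted cubic on $\var$ arises this way. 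For the converse I must confirm that any twisted cubic $\N$ lying on $\var$ really does satisfy the transversal condition of part~2 — that its extension $\N^\star$ meets each transversal $g,g^q,g^{q^2}$. This should follow because $\N^\star$ lies on $\var^\star$, and the transversals are themselves generators of $\var^\star$; since (by Theorem~\ref{one-pt-gen}) an nrc on $\var^\star$ meets each generator in exactly one point, $\N^\star$ meets each transversal in one point, as required.

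\medskip
\textbf{Showing each is a directrix.} It remains to show each of these $q^2$ twisted cubics is a directrix of $\var$, i.e.\ that $\var$ can be re-presented as a scroll ruling its conic directrix $\C$ against this twisted cubic via a projectivity. By Theorem~\ref{one-pt-gen}, each twisted cubic $\N$ on $\var$ meets every generator in exactly one point, and by Theorem~\ref{conic-dir-unique} the conic directrix $\C$ also meets every generator in one point. This sets up a natural bijection between the points of $\C$ and the points of $\N$: each generator joins its unique point of $\C$ to its unique point of $\N$. I would argue this bijection is a projectivity (it is induced by the parametrization of the generators by $\theta\in\Fq\cup\{\infty\}$), so the $q+1$ generators are exactly the lines joining corresponding points of $\C$ and $\N$ under a projectivity. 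Hence $(\C,\N)$ with this projectivity generates the same scroll $\var$, so $\N$ is a directrix.

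\medskip
\textbf{Main obstacle.} The routine part is the arithmetic $q^2+q+1-(q+1)=q^2$; the delicate parts are the two directions of the bijection. The bulk of the work is verifying that the Bruck-Bose dictionary applies cleanly: that \emph{every} twisted cubic on $\var$ (not merely those I construct from sublines) is disjoint from $\li$ in the required sense and satisfies the transversal condition, so that the $q^2$ sublines account for \emph{all} twisted cubics and not just some of them. Establishing that the point-correspondence along generators is actually a projectivity in $\PGL(2,q)$ — rather than merely a bijection — is the other point needing care, though it should follow from the explicit generator parametrization $\ell_\theta$ used in the proof of Theorem~\ref{the-nine-quad}.
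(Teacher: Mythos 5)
Your proposal is correct and follows essentially the same route as the paper: position $\var$ via Result~\ref{FFA-result} as the Bruck--Bose image of a tangent $\Fq$-subplane, use Theorem~\ref{one-pt-gen} together with the fact that the transversals are generators of $\var\star$ to show every twisted cubic on $\var$ corresponds to an $\Fq$-subline disjoint from $\li$, and count the $q^2$ such sublines. The one point you flag as delicate --- that the generator-induced correspondence between $\C$ and the twisted cubic is a projectivity --- is treated no more explicitly in the paper, which concludes ``directrix'' directly from disjointness from $\alpha$ and one point per generator.
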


\begin{proof} 
By Theorem~\ref{the-nine-quad}, all ruled quintic surfaces are projectively equivalent. So without loss of generality, we can position a ruled quintic surface so that it corresponds to an $\Fq$-subplane of $\PG(2,q^3)$ which we denote by $\B$. That is, by Result~\ref{FFA-result}, $\S$ is a regular 2-spread in a hyperplane $\si$,  $\var\cap\si$ is  the conic directrix $\C$ of $\var$,   $\C$ lies in a plane of $\S$, and in the cubic extension $\PG(6,q^3)$, the transversals $g,g^q,g^{q^2}$ of $\S$ are generators of ${\var}^\star$.

  Let $\N_3$ be a twisted cubic contained in $\var$, and denote the 3-space containing $\N_3$ by $\Pi_3$. 
  As $\var\cap\si=\C$,  $\Pi_3$ meets $\si$ in a plane, we show this is a plane of $\S$. 
    In $\PG(6,q^3)$, $\var^\star$ is a ruled quintic surface that contains the twisted cubic $\N_3^\star$, moreover, the transversals $g,g^q,g^{q^2}$ of $\S$ are generators of $\var^\star$. So by Theorem~\ref{one-pt-gen}, $\N_3^\star$ contains one point on each of $g,g^q$ and $g^{q^2}$.  Hence the 3-space $\Pi_3^\star$ contains an extended plane of $\S$, and so $\Pi_3$ meets $\si$ in a plane of $\S$.
  Hence $\Pi_3\cap\alpha=\emptyset$, further, by Theorem~\ref{one-pt-gen}, $\N_3$ contains one point on each generator of $\var$, thus $\N_3$ is a directrix of $\var$. 
   
   By Result~\ref{FFA-result}, 
 $\N_3$ corresponds  in $\PG(2,q^3)$ to an $\Fq$-subline of $\B$ disjoint from $\li$. Conversely,  every $\Fq$-subline of $\B$ disjoint from $\li$ corresponds to a twisted cubic on $\var$. 
 Thus the twisted cubics in $\var$ are in 1-1 correspondence with the $\Fq$-sublines of  $\B$ that are disjoint from $\li$. 
 As there are $q^2$ such $\Fq$-sublines, there are $q^2$ twisted cubics on $\var$. 
 \end{proof}

%
%
%

Suppose we position $\var$ so that it corresponds via the Bruck-Bose representation  to a tangent $\Fq$-subplane $\B$ of $\PG(2,q^3)$. So we have a regular 2-spread $\S$ in a hyperplane $\si$, and the conic directrix of $\var$ lies in a plane $\alpha\in\S$. 
We define the {\em splash} 
of $\B$ to be the set of $q^2+1$ points on $\li$ that 
 lie on an extended line of $\B$. The {\em splash} 
 of $\var$ is defined to be the corresponding set of $q^2+1$ planes of $\S$. We denote the splash of $\var$ by $\mathbb S$. Note that $\alpha$ is a plane of $\mathbb S$. 
 We show that the remaining $q^2$ planes of $\mathbb S$ are related  to the $q^2$ twisted cubics of $\var$.

\begin{corollary}\Label{remark-BB} 
Let $\S$ be a regular $2$-spread in a hyperplane $\si$ of $\PG(6,q)$. 
Without loss of generality, we can position $\var$ so that it corresponds via the Bruck-Bose representation  to a tangent $\Fq$-subplane of $\PG(2,q^3)$. Then the conic directrix of $\var$  lies in a plane $\alpha\in\S$,   the $q^2$ 3-spaces containing a twisted cubic of $\var$ meet $\si$ in distinct planes of $\S$, and these planes together with $\alpha$ form the splash $\mathbb S$ of $\var$.
\end{corollary}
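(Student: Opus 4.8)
The plan is to exploit the $1$--$1$ correspondence established inside the proof of Theorem~\ref{tc-direct}, between the twisted cubics of $\var$ and the $\Fq$-sublines of $\B$ disjoint from $\li$, and to push the incidence data of these sublines back through the Bruck-Bose map. First I would recall that, exactly as in Theorem~\ref{tc-direct}, we may position $\var$ so that $\var\cap\si=\C$ lies in the spread plane $\alpha\in\S$ corresponding to the tangent point $T=\B\cap\li$. Since $T$ lies on every extended line of $\B$ through $T$, the point $T$ belongs to the splash of $\B$, and so $\alpha$ is a plane of $\mathbb S$.

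Next I would identify the $\Fq$-sublines of $\B$ disjoint from $\li$ as precisely the $q^2$ lines of $\B$ not through $T$. Indeed $\B$ has $q^2+q+1$ lines, of which $q+1$ pass through $T$; a line $m$ of $\B$ not through $T$ extends to a secant line $\ell$ of $\B$ in $\PG(2,q^3)$ with $\ell\cap\B=m$, so $\ell\cap\li$ is a point $P\neq T$ lying outside $\B$, whence $m$ is disjoint from $\li$. Under Bruck-Bose the line $\ell$ corresponds to a $3$-space $\Sigma$ of $\PG(6,q)$ containing the spread plane $\pi_P\in\S$ associated to $P$, and by Result~\ref{FFA-result}(2) the subline $m$ corresponds to a twisted cubic $\N_3$ lying in $\Sigma$. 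Because $\N_3$ spans a $3$-space, the $3$-space $\Pi_3$ of $\N_3$ equals $\Sigma$, so $\Pi_3\cap\si=\pi_P$, a plane of the splash.

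Finally I would establish distinctness. If two lines $m_1,m_2$ of $\B$, neither through $T$, extended to lines of $\PG(2,q^3)$ meeting $\li$ in a common point $P$, then both extended lines pass through $P$ and through the point $m_1\cap m_2$ of $\B$; as two distinct points determine a unique line, the two extensions would coincide unless $m_1\cap m_2=P$, forcing $P\in\B\cap\li=\{T\}$ and hence $m_1,m_2$ through $T$, a contradiction. Thus the $q^2$ lines not through $T$ meet $\li$ in $q^2$ distinct points, all different from $T$, so their corresponding $3$-spaces meet $\si$ in $q^2$ distinct planes of $\S$, none equal to $\alpha$. These $q^2$ planes together with $\alpha$ give $q^2+1$ planes, which is the size of $\mathbb S$, so they are exactly the planes of $\mathbb S$.

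The main obstacle is the middle step: identifying which plane of $\S$ the cubic's $3$-space $\Pi_3$ meets $\si$ in. This rests on the Bruck-Bose fact that the $3$-space representing a line $\ell$ of $\PG(2,q^3)$ contains the spread plane of $\ell\cap\li$, together with the observation that the twisted cubic of a subline of $\ell$ spans exactly that $3$-space; once $\Pi_3\cap\si$ is pinned down as $\pi_P$, the remaining distinctness claim is elementary incidence geometry in $\PG(2,q^3)$.
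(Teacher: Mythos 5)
Your proof is correct and follows essentially the same route as the paper: both translate the twisted cubics of $\var$ into the $\Fq$-sublines of $\B$ disjoint from $\li$ via Result~\ref{FFA-result}, and read off the splash plane of each cubic's $3$-space from the point where the corresponding extended line of $\B$ meets $\li$. You are in fact more explicit than the paper about the distinctness of the $q^2$ resulting planes; the only loose end there is that, when two extensions share the point $P$, you should also dispose of the alternative that the two extensions coincide, which follows at once from your earlier observation that $\ell\cap\B=m$ (so coinciding extensions would force $m_1=m_2$).
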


\begin{proof}
By Theorem~\ref{the-nine-quad}, all ruled quintic surfaces are projectively equivalent, so without loss of generality, let $\var$ be  positioned so that it corresponds to an $\Fq$-subplane $\B$ of $\PG(2,q^3)$ which is tangent to $\li$.  
 Let $b$ be an $\Fq$-subline of $\B$ disjoint from $\li$, so the extension of $b$ meets $\li$ in a point $R$ which lies in the splash of $\B$. By  Result~\ref{FFA-result}, $b$ corresponds in $\PG(6,q)$ to a twisted cubic of $\var$ which  lies in a 3-space that meets $\si$ in the plane of $\mathbb S$ corresponding to the point $R$. 
 \end{proof}
 
Using this Bruck-Bose setting, we describe the 3-spaces of $\PG(6,q)$ that contain a plane of the regular 2-spread $\S$.
 
 \begin{corollary}\Label{tc-splash-3}
 Position $\var$ as in Corollary~\ref{remark-BB}, so $\S$ is a regular 2-spread in the hyperplane $\si$, and the conic directrix of $\var$ lies in a plane $\alpha$ contained in the splash $\mathbb S\subset \S$ of $\var$. 
 \begin{enumerate}
 \item Let $\beta\in\mathbb S\setminus\alpha$, then there exists a unique 3-space containing $\beta$ that meets $\var$ in a twisted cubic. The remaining $3$-spaces containing $\beta$ (and not in $\si$) meet $\var$ in 0 or 1 point. 
 \item Let $\gamma\in\S\setminus\mathbb S$, then each 3-space containing $\gamma$ and not in $\si$ meets $\var$ in 0 or 1 point. 
 \end{enumerate}
 \end{corollary}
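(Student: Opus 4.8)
The plan is to work entirely inside the Bruck--Bose dictionary set up for this section, reducing each statement about $3$-spaces meeting $\var$ to a statement about lines of $\PG(2,q^3)$ meeting the tangent $\Fq$-subplane $\B$. Positioning $\var$ as in Corollary~\ref{remark-BB}, the subplane $\B$ is tangent to $\li$ at the point $T$ corresponding to the spread plane $\alpha$, and under the Bruck--Bose correspondence underlying Result~\ref{FFA-result} the affine points of $\var$ (those off the conic directrix $\C$) correspond bijectively to the affine points of $\B$. I would first isolate two facts: (i) the $3$-spaces that contain a spread plane but do not lie in $\si$ are exactly the images of the affine lines of $\PG(2,q^3)$ through the corresponding point of $\li$; and (ii) a line of $\PG(2,q^3)$ meets $\B$ in $0$, $1$, or $q+1$ points, the last occurring precisely when the line is the extension of a line of $\B$ (two points of $\B$ already determine a line of $\B$, whose extension is the unique $\PG(2,q^3)$-line through them).

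Next I would fix the reduction. For a spread plane $\beta\neq\alpha$ we have $\beta\cap\alpha=\emptyset$, so any $3$-space $\Pi_3\supseteq\beta$ with $\Pi_3\not\subseteq\si$ satisfies $\Pi_3\cap\si=\beta$, whence $\Pi_3\cap\C=\emptyset$. Thus $\Pi_3\cap\var$ consists only of affine points and, under the dictionary, equals $r\cap\B$ for the affine line $r$ through the point of $\li$ corresponding to $\beta$. This reduces $|\Pi_3\cap\var|$ to $|r\cap\B|\in\{0,1,q+1\}$.

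For part (1), take $\beta\in\mathbb S\setminus\alpha$, corresponding to a splash point $R\neq T$. There is exactly one extended line $\bar m$ of $\B$ through $R$: existence because $R$ lies in the splash, and uniqueness because two distinct extended lines meet in a point of $\B$, which cannot be $R\notin\B$. This $\bar m$ is the extension of a line $m$ of $\B$ not through $T$, i.e.\ an $\Fq$-subline disjoint from $\li$, so by Result~\ref{FFA-result} its $3$-space meets $\var$ in the twisted cubic $m$; this is the asserted unique twisted-cubic $3$-space (uniqueness also follows from the bijection of Corollary~\ref{remark-BB}). Every other affine line through $R$ is not an extended line, hence meets $\B$ in $0$ or $1$ points, giving the claim. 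For part (2), take $\gamma\in\S\setminus\mathbb S$, corresponding to $U\in\li$ not in the splash; by definition no extended line of $\B$ passes through $U$, so every affine line through $U$ meets $\B$ in $0$ or $1$ points, and the same reduction yields the conclusion.

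The main obstacle I anticipate is not the counting but pinning down the dictionary precisely: verifying that $\Pi_3\cap\var$ is genuinely the affine point set $r\cap\B$, which requires both the affine-point correspondence and the vanishing of the conic contribution (the point $\beta,\gamma\neq\alpha$), together with a careful statement of the $0/1/(q+1)$ trichotomy and the ``two points force an extended line'' argument. Once these are in place, both parts follow immediately.
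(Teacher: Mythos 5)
Your proposal is correct and follows essentially the same route as the paper: translate $3$-spaces about a spread plane into lines of $\PG(2,q^3)$ through the corresponding point of $\li$, and use the $0/1/(q{+}1)$ intersection pattern of lines with the tangent subplane $\B$ together with Result~\ref{FFA-result} and the definition of the splash. The paper states this more tersely; your additional checks (that $\Pi_3\cap\C=\emptyset$ for $\beta\neq\alpha$, and the uniqueness of the extended line of $\B$ through a splash point) are exactly the details it leaves implicit.
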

 
 \begin{proof} By Corollary~\ref{remark-BB}, we can position $\var$ so that it corresponds to an $\Fq$-subplane $\B$ of $\PG(2,q^3)$ which is tangent to $\li$. The 3-spaces that contain a plane of $\S$ (and do not lie in $\si$) correspond to lines of $\PG(2,q^3)$. Each point on $\li$  not in $\B$ but in the splash of $\B$  lies on a unique line that meets $\B$ in an $\Fq$-subline. By Result~\ref{FFA-result}, this corresponds to a twisted cubic in $\var$. The remaining lines meet $\B$ in 0 or 1 point, so the remaining 3-spaces meet $\var$ in 0 or 1 point.
\end{proof}

As $\var$ corresponds to an $\Fq$-subplane, we have the following result. 

\begin{theorem}\Label{2pts-tc}
Let $\var$  be a ruled quintic surface in $\PG(6,q)$. 
\begin{enumerate}
\item Two twisted cubics on $\var$ meet in a unique point.
\item Let  $P,Q$ be points lying on different generators of $\var$, and not in the conic directrix. Then $P,Q$ lie on a unique twisted cubic of $\var$. 
\end{enumerate}
\end{theorem}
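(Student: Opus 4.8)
The plan is to exploit the established correspondence between $\var$ and a tangent $\Fq$-subplane $\B$ of $\PG(2,q^3)$, translating both statements into elementary incidence facts about $\B$ and its $\Fq$-sublines. By Theorem~\ref{the-nine-quad} all ruled quintic surfaces are projectively equivalent, so as in Corollary~\ref{remark-BB} I may position $\var$ to correspond to $\B$, with the conic directrix lying in the spread plane $\alpha\in\S$ dual to the tangent point $T=\B\cap\li$. By Theorem~\ref{tc-direct} the twisted cubics on $\var$ are in $1$--$1$ correspondence with the $\Fq$-sublines of $\B$ that are disjoint from $\li$, that is, the $\Fq$-sublines of $\B$ not through $T$.

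For part 1, I would take two distinct twisted cubics $\N_3,\N_3'$ on $\var$, corresponding to two distinct $\Fq$-sublines $b,b'$ of $\B$, each disjoint from $\li$. Two distinct $\Fq$-sublines of the affine plane $\B\setminus\{T\}$ are two distinct lines of $\B\cong\PG(2,q)$, and in a projective plane two distinct lines meet in exactly one point. That common point $X$ is an affine point of $\B$ (it cannot be $T$, since neither $b$ nor $b'$ passes through $T$), hence a point of $\PG(6,q)\setminus\si$ lying on both $\N_3$ and $\N_3'$. Conversely any common point of $\N_3$ and $\N_3'$ pulls back to a common point of $b$ and $b'$, so the two twisted cubics meet in exactly this one point.

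For part 2, let $P,Q$ lie on distinct generators of $\var$ and not on the conic directrix $\C$. Since $\C=\var\cap\si$, the points $P,Q$ lie off $\si$ and so correspond to distinct affine points $P',Q'$ of $\B$, both distinct from $T$. A unique line of $\B$ joins $P'$ and $Q'$; I must check this line is an $\Fq$-subline disjoint from $\li$, i.e.\ does not pass through $T$. This is where I expect the only real subtlety: I would argue that if the joining subline passed through $T$, then its Bruck-Bose image would be an $\Fq$-subline of $\B$ meeting $\li$, which by Result~\ref{FFA-result}(1) is a \emph{line} of $\PG(6,q)$ through the tangent configuration, forcing $P,Q$ to lie on a common generator of $\var$ --- contradicting that they lie on different generators. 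Hence the joining subline is disjoint from $\li$ and corresponds to a unique twisted cubic of $\var$ through $P$ and $Q$; uniqueness follows from uniqueness of the joining line in $\B$ together with part 1.

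The main obstacle is precisely this last incidence check --- confirming that ``different generators, not on $\C$'' translates exactly into ``two affine points of $\B$ whose join avoids the tangent point $T$.'' The cleanest route is to observe that the generators of $\var$ are the $\Fq$-sublines of $\B$ through $T$ (each a line of $\PG(6,q)$ meeting $\si$, by Result~\ref{FFA-result}(1)), so $P,Q$ on distinct generators means $P',Q'$ lie on distinct lines of $\B$ through $T$; two such points are joined by a line omitting $T$, which is the desired disjoint-from-$\li$ subline. I would spell out this identification of generators with sublines through $T$ first, since both parts rely on it.
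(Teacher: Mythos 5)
Your proof is correct and follows essentially the same route as the paper: both parts are translated via the Bruck--Bose correspondence into incidence statements about $\Fq$-sublines of the tangent subplane $\B$, using the identification of generators with the sublines through $T=\B\cap\li$ and of twisted cubics with the sublines missing $\li$. The only difference is in part 2, where the paper rules out the joining subline passing through $T$ by a geometric argument in $\PG(6,q)$ (if the line $PQ$ met $\alpha$, then $\langle\alpha,P,Q\rangle$ would be a 3-space containing $\alpha$ and two generators, contradicting Corollary~\ref{2-gen-alpha}), whereas you argue on the $\PG(2,q^3)$ side that such a subline would correspond to a single generator containing both $P$ and $Q$, contradicting that they lie on distinct (pairwise skew) generators; both are valid, and yours is marginally more economical.
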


\begin{proof} Without loss of generality let $\var$ be positioned as  described in Corollary~\ref{remark-BB}. So the conic directrix lies in a plane $\alpha$ contained in a regular 2-spread $\S$ in $\si$, and $\var$ corresponds to a  $\Fq$-subplane $\B$ of $\PG(2,q^3)$ tangent to $\li$. 
Let $\N_1,\N_2$ be two twisted cubics contained in $\var$. By Result~\ref{FFA-result}, they correspond in $\PG(2,q^3)$ to two $\Fq$-sublines of $\B$ not containing $\B\cap\li$, and so meet in a unique affine point  $P$. This corresponds to a unique point $P\in\var\setminus\alpha$ lying in both $\N_1$ and $\N_2$, proving part 1. 

For part 2, let $P,Q$ be points lying on distinct generators of $\var$, $P,Q\notin\C$. If the line $PQ$ met $\alpha$, then 
$\langle \alpha,P,Q\rangle$ is a 3-space that contains $\alpha$ and the generators of $\var$ containing $P$ and $Q$, contradicting Corollary~\ref{2-gen-alpha}. 
Hence the line  $PQ$ is skew to $\alpha$.  In $\PG(2,q^3)$, $P,Q$ correspond to two affine points in the tangent $\Fq$-subplane $\B$, so they lie on a unique $\Fq$-subline $b$ of $\B$. 
By Result~\ref{FFA-result}, the generators of $\var$ correspond to the $\Fq$-sublines of $\B$ through the point $\B\cap\li$. As $PQ$ is skew to $\alpha$, we have  $b\cap\li=\emptyset$. 
Hence by Result~\ref{FFA-result}, in $\PG(6,q)$, $P,Q$ lie on a unique twisted cubic of $\var$. 
 \end{proof}

\section{$5$-spaces meeting $\var$}\Label{sec-5}

In this section we determine how 5-spaces meet $\var$ and count the different intersection types. A series of lemmas is used to prove the main result which is stated in Theorem~\ref{count-nrc}.

\begin{lemma}\Label{5containsC}
 Let $\var$ be a ruled quintic surface of $\PG(6,q)$  with conic directrix $\C$. Of the 
$q^3+q^2+q+1$ $5$-spaces of $\PG(6,q)$ containing $\C$, $r_i$ meet $\var$ in precisely $\C$ and $i$ generators, where
$$r_3=\frac{q^3-q}6,\quad r_2=q^2+q,\quad r_1=\frac {q^3}2+\frac {q}2+1,\quad r_0=\frac{q^3-q}3.$$
%
\end{lemma}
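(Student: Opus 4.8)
The plan is to identify the $5$-spaces containing $\C$ with the points of a $\PG(3,q)$, translate ``contains a generator'' into the vanishing of a binary cubic form, and then reduce the whole count to a classical enumeration of binary cubic forms by factorization type.

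First I would observe that, since $\C$ spans the plane $\alpha$, a $5$-space containing $\C$ is exactly a hyperplane containing $\alpha$. Using the coordinates of the proof of Theorem~\ref{the-nine-quad}, where $\alpha$ is cut out by $x_0=x_1=x_2=x_3=0$, such a hyperplane has equation $a_0x_0+a_1x_1+a_2x_2+a_3x_3=0$ with $(a_0,a_1,a_2,a_3)\in\PG(3,q)$; this recovers the total of $q^3+q^2+q+1$. By Lemma~\ref{5-space-quintic} together with Theorem~\ref{conic-dir-unique} (the conic $\C$ can only appear as the quadratic component of the degree-$5$ section), each such hyperplane meets $\var$ in $\C$ together with a number of generators, and by Theorem~\ref{x-gen} at most three occur, so these hyperplanes are precisely the ones contributing to $r_0,r_1,r_2,r_3$.

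Next I would note that such a hyperplane contains the generator $\ell_\theta$ exactly when $N_\theta=(1,\theta,\theta^2,\theta^3,0,0,0)$ lies on it, i.e. when $a_0+a_1\theta+a_2\theta^2+a_3\theta^3=0$. Homogenizing via $\theta=s/r$, this is the vanishing of the binary cubic form $F(r,s)=a_0r^3+a_1r^2s+a_2rs^2+a_3s^3$ at the point $(r,s)\in\PG(1,q)$, with $\theta=\infty$ corresponding to $(r,s)=(0,1)$, that is, to $a_3=0$. Hence the number of generators contained in the hyperplane is the number of \emph{distinct} roots of $F$ in $\PG(1,q)$, and $r_i$ equals the number of nonzero binary cubic forms, up to scalar, with exactly $i$ distinct roots in $\PG(1,q)$.

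Finally I would enumerate these forms by their factorization over $\Fq$. Three distinct linear factors give $r_3=\binom{q+1}{3}=\tfrac{q^3-q}{6}$; a squared linear factor times a distinct linear factor gives $r_2=(q+1)q=q^2+q$. Forms with a single root split into a cube of a linear factor ($q+1$ of them) and a linear factor times an irreducible quadratic; counting irreducible binary quadratics up to scalar as $\tfrac{q^2-q}{2}$ (the complement in $\PG(2,q)$ of the $\binom{q+1}{2}$ forms with two roots and the $q+1$ forms with a double root), the latter contributes $(q+1)\tfrac{q^2-q}{2}$, giving $r_1=(q+1)+\tfrac{q^3-q}{2}=\tfrac{q^3}{2}+\tfrac{q}{2}+1$. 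The irreducible cubics give $r_0$, most cleanly obtained as the complement $r_0=(q^3+q^2+q+1)-r_1-r_2-r_3=\tfrac{q^3-q}{3}$. The arithmetic is routine; the only points needing care are the correct homogenization at $\theta=\infty$ and the count of irreducible binary quadratics, neither of which is a genuine obstacle.
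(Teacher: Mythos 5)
Your proof is correct, but it follows a genuinely different route from the paper's. You coordinatise: a $5$-space containing $\C$ is a hyperplane $a_0x_0+a_1x_1+a_2x_2+a_3x_3=0$ through $\alpha$, it contains the generator $\ell_\theta$ exactly when it contains $N_\theta$, and this turns the whole lemma into the classical enumeration of binary cubic forms over $\Fq$ up to scalar by factorisation type (three distinct linear factors; square times distinct linear; cube or linear times irreducible quadratic; irreducible). The paper instead argues synthetically: $r_3=\binom{q+1}{3}$ via the uniqueness of the $5$-space through three generators (Theorem~\ref{x-gen}), $r_2$ by examining the pencil of $q+1$ hyperplanes about a $4$-space $\langle\alpha,\ell_0,\ell_1\rangle$ and observing that exactly two of them acquire no third generator, $r_1$ by a double count of flags $(\ell,\Pi_5)$, and $r_0$ by complementation. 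Your version has the advantage of producing all four numbers directly and independently (so the total $q^3+q^2+q+1$ serves as a check rather than a definition of $r_0$), and it explains the shape of each answer --- e.g.\ $r_0=(q^3-q)/3$ is visibly the number of irreducible cubics. The two preliminary reductions you make are exactly the ones needed and are correctly sourced: Lemma~\ref{5-space-quintic} (case (d), the only case whose degree-$5$ section contains a conic component) guarantees that every hyperplane through $\C$ meets $\var$ in precisely $\C$ plus the generators it contains, and the homogenisation at $\theta=\infty$ and the count $(q^2-q)/2$ of irreducible binary quadratics up to scalar are both handled correctly. No gaps.
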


\begin{proof}  Let $\var$ be a ruled quintic surface of $\PG(6,q)$  with conic directrix $\C$ lying in a plane $\alpha$.  By Lemma~\ref{5-space-quintic}, a 5-space containing $\C$ contains at most three generator lines of $\var$. 
By Theorem~\ref{x-gen}, three generators of $\var$ lie in a unique $5$-space. Hence there are 
$$r_3={q+1\choose3}$$
 5-spaces that contain three generators of $\var$. Such a 5-space contains three points of $\C$, and so contains $\C$ and $\alpha$.

Denote the generator lines of $\var$ by $\ell_0,\ldots,\ell_q$ and consider two generators,  $\ell_0,\ell_1$ say. By Corollary~\ref{2-gen-alpha}, $\Sigma_4=\langle \alpha,\ell_0,\ell_1\rangle$ is a 4-space. By  Theorem~\ref{x-gen}, $\langle\Sigma_4,\ell_i\rangle$ $i=2,\ldots, q$ are distinct 5-spaces. That is, $q-1$ of the 5-spaces about $\Sigma_4$ contain 3 generators, and hence the remaining two contain $\ell_0$, $\ell_1$ and no further generator of $\var$. Hence by Lemma~\ref{5-space-quintic}, $q-1$ of the 5-spaces about $\Sigma_4$ meet $\var$ in exactly $\C$ and  3 generators; and the remaining two 5-spaces about $\Sigma_4$ meet $\var$ in exactly  $\C$ and two generators. There are ${q+1\choose 2}$ choices for 
$\Sigma_4$, hence the number of 5-spaces that meet $\var$ in precisely  $\C$ and two generators is $$r_2=2\times {q+1\choose 2}=(q+1)q.$$ 
Next, let $r_1$ be the number of $5$-spaces that meet $\var$ in precisely $\C$ and one generator. We count in two ways ordered pairs $(\ell,\Pi_5)$ where $\ell$ is a generator of $\var$, and $\Pi_5$ is a 5-space that contains $\ell$ and $\alpha$, giving
$$(q+1)(q^2+q+1)=3r_3\ +\ 2r_2\ +\ r_1.$$
Hence $r_1=q^3/2+q/2+1$. 
Finally, the number of 5-spaces containing $\C$ and zero generators is $r_0=(q^3+q^2+q+1) - r_3-r_2-r_1=(q^3-q)/3$, as required.
\end{proof}

\begin{lemma}\Label{5containsN}
 Let $\var$ be a ruled quintic surface of $\PG(6,q)$ and let 
  $\N_3$ be a twisted cubic directrix of $\var$.
\begin{enumerate}
\item Of  the $q^2+q+1$ $5$-spaces of $\PG(6,q)$ containing $\N_3$, $s_i$ meet $\var$ in precisely $\N_3$ and $i$ generators, where
$$s_2=\frac{q^2+q}2,\quad s_1=q+1,\quad\quad s_0=\frac{q^2-q}2.$$
\item The total number of 5-spaces that meet $\var$ in a twisted cubic and $i$ generators is $q^2s_i$, $i=0,1,2$.
\end{enumerate}
\end{lemma}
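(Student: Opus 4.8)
The plan is to project $\PG(6,q)$ from the $3$-space $\Pi_3=\langle\N_3\rangle$ onto the quotient plane $\PG(2,q)=\PG(6,q)/\Pi_3$, thereby reducing the enumeration of $5$-spaces through $\N_3$ to a line-counting problem for a point set in $\PG(2,q)$. First I would note that the $5$-spaces of $\PG(6,q)$ containing $\N_3$ are exactly the $5$-spaces containing $\Pi_3$, and that these correspond bijectively to the lines of the quotient plane; this already accounts for the figure $q^2+q+1$. By Theorem~\ref{one-pt-gen} each generator $\ell$ of $\var$ meets $\N_3$, and hence $\Pi_3$, in a single point, so $\ell$ projects to a single point $\bar\ell$ of $\PG(2,q)$, and a $5$-space $\Pi_5\supseteq\Pi_3$ contains $\ell$ if and only if its image line contains $\bar\ell$. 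Thus the number of generators lying in $\Pi_5$ equals the number of the $q+1$ image points $\bar\ell_0,\dots,\bar\ell_q$ incident with the corresponding line.

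The heart of the argument is to show that these $q+1$ image points form a $(q+1)$-arc in $\PG(2,q)$, i.e.\ that they are distinct and no three are collinear. Distinctness follows because $\bar\ell_i=\bar\ell_j$ would force $\langle\Pi_3,\ell_i\rangle=\langle\Pi_3,\ell_j\rangle$ to be a single $4$-space containing $\N_3$ together with two generators, contradicting the fact, recorded in the proof of Corollary~\ref{3-4-no-gen}, that a $4$-space meeting $\var$ in $\N_3$ carries at most one generator. The no-three-collinear condition is immediate from Lemma~\ref{5-space-quintic}: three collinear image points would lie on a line whose preimage is a $5$-space through $\N_3$ containing three generators, whereas a $5$-space through a twisted cubic of $\var$ contains at most two generators.

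Once the image points are known to form a $(q+1)$-arc, Part 1 is a routine incidence count valid for every $q$. Through each arc point pass exactly $q$ secants and one tangent, so the arc has $\binom{q+1}{2}=(q^2+q)/2$ secant lines, $q+1$ tangent lines, and $q^2+q+1-(q^2+q)/2-(q+1)=(q^2-q)/2$ external lines. Translating back through the projection, and noting that by Lemma~\ref{5-space-quintic} each such $5$-space meets $\var$ in $\N_3$ together with precisely the corresponding number of generators, gives $s_2=(q^2+q)/2$, $s_1=q+1$ and $s_0=(q^2-q)/2$. For Part 2 I would count incident pairs $(\N_3,\Pi_5)$ in which $\N_3$ is a twisted cubic of $\var$ and $\Pi_5$ meets $\var$ in $\N_3$ and $i$ generators. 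By Theorem~\ref{tc-direct} there are exactly $q^2$ twisted cubics on $\var$, each contributing $s_i$ such $5$-spaces by Part 1; and because any $5$-space whose intersection with $\var$ is of type (c) of Lemma~\ref{5-space-quintic} has a \emph{unique} cubic component, no $5$-space is associated with two different twisted cubics. Hence the total is $q^2 s_i$.

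I expect the main obstacle to be establishing the arc property, specifically pinning down distinctness of the image points, since the collinearity statement and the final counts are then forced. The whole argument rests on the dimension bookkeeping for $4$- and $5$-spaces through $\N_3$ already contained in Lemma~\ref{5-space-quintic} and Corollary~\ref{3-4-no-gen}, so no new geometric input beyond the projection viewpoint should be required.
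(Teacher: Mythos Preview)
Your argument is correct. At the level of logical dependencies it matches the paper's proof exactly: both hinge on Lemma~\ref{5-space-quintic} (a $5$-space through $\N_3$ carries at most two generators) and the observation from Corollary~\ref{3-4-no-gen} that a $4$-space through $\N_3$ carries at most one, and both finish Part~2 by invoking Theorem~\ref{tc-direct}. The paper, however, does not introduce the quotient $\PG(6,q)/\Pi_3$; it argues directly in $\PG(6,q)$, computing $s_2=\binom{q+1}{2}$ from the pair count, then fixing a generator $\ell$, observing that $q$ of the $q+1$ hyperplanes through $\langle\Pi_3,\ell\rangle$ acquire a second generator, and reading off $s_1=q+1$ and $s_0$ by subtraction. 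Your projection repackages exactly this computation as the secant/tangent/external-line count for a $(q+1)$-arc in the quotient plane, which is a cleaner and more conceptual phrasing---it makes transparent \emph{why} the numbers $s_i$ are the familiar line-type counts for a conic---while the paper's version has the virtue of being entirely self-contained and not requiring the reader to set up the quotient.
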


\begin{proof}
Let $\var$ be a ruled quintic surface of $\PG(6,q)$
  with a twisted cubic directrix $\N_3$ lying in the 3-space $\Pi_3$.   
  By Lemma~\ref{5-space-quintic}, a 5-space containing $\N_3$ contains at most two generators  of $\var$, so the number of 5-spaces that contain $\Pi_3$ and exactly two generator lines is $s_2={q+1\choose 2}$. Let $\ell$ be a generator of $\var$ and consider the  4-space $\Pi_4=\langle \Pi_3,\ell\rangle$. 
 For each generator $m\neq\ell$, $\langle\Pi_4,m\rangle$ is a 5-space about $\Pi_4$ that meets $\var$ in $\N_3$, $\ell$ and $m$, and in no further point by Lemma~\ref{5-space-quintic}. 
 This accounts for $q$ of the 5-spaces containing $\Pi_4$. Hence the remaining 5-space containing $\Pi_4$  meets $\var$ in exactly $\N_3$ and $\ell$. 
 That is,  exactly  one of the 5-spaces about $\Pi_4=\langle \Pi_3,\ell\rangle$  meets $\var$ in precisely $\N_3$ and $\ell$. There are $q+1$ choices for the generator  $\ell$, hence $s_1=q+1$. Finally $s_0=(q^2+q+1)-s_2-s_1=(q^2-q)/2$, as required. For part 2, by Lemma~\ref{tc-direct}, $\var$ contains $q^2$ twisted cubics, so the total number of 5-spaces meeting $\var$ in a twisted cubic and $i$ generators is $q^2s_i$, $i=0,1,2$.
\end{proof}


%
%

The next result looks at properties of 4-dim nrcs contained in $\var$. In particular, we show that 
there are no 5-spaces that meet $\var$ in a 4-dim nrc and 0 generator lines.

\begin{lemma}\Label{5contains4}
 Let $\var$ be a ruled quintic surface of $\PG(6,q)$ with conic directrix $\C$ in the plane $\alpha$, and let 
  $\N_4$ be a $4$-dim nrc contained in $\var$. 
  \begin{enumerate}
    \item 
The $q+1$ $5$-spaces  containing $\N_4$ each contain a distinct generator line of $\var$.
\item The $4$-space containing $\N_4$ meets $\alpha$ in a point $P$, and either $P=\C\cap\N_4$ 
 or $q$ is even and $P$ is the nucleus of $\C$. 
\end{enumerate}
\end{lemma}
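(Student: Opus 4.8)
The plan is to treat the two parts in order, with the degree-five intersection classification of Lemma~\ref{5-space-quintic} as the main tool; part~2 will build on part~1.

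For part~1, let $\Pi_4=\langle\N_4\rangle$ be the $4$-space carrying $\N_4$. As $\Pi_4$ has codimension two in $\PG(6,q)$, exactly $q+1$ $5$-spaces contain it, and any two of them meet precisely in $\Pi_4$. Each such $5$-space $\Pi_5$ satisfies $\Pi_5\cap\var\supseteq\N_4$, so the degree-five curve $\Pi_5\cap\var$ contains the irreducible quartic $\N_4$; by Lemma~\ref{5-space-quintic} the only option is case~(b) with the $4$-dim nrc equal to $\N_4$, and a degree count ($5=4+1$) forces the residual to be exactly one generator (it is an $\Fq$-line on $\var$, hence a generator by Theorem~\ref{line-rqs}). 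Finally, if two of the $q+1$ $5$-spaces shared a generator $\ell$, then $\ell$ would lie in their common intersection $\Pi_4$, contradicting Corollary~\ref{3-4-no-gen}. Hence the generators are distinct, proving part~1 (and incidentally giving a bijection between the $q+1$ generators and the pencil of $5$-spaces through $\Pi_4$).

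For part~2 I would first pin down $\Pi_4\cap\alpha$. By dimension it is at least a point; and it cannot be a line or all of $\alpha$, for then $\langle\Pi_4,\alpha\rangle$ would be a $5$-space containing $\N_4$ together with the conic directrix $\C\subseteq\alpha$, whence $\var\cap\langle\Pi_4,\alpha\rangle$ would have degree at least $4+2=6$, contradicting Lemma~\ref{5-space-quintic}. So $\Pi_4\cap\alpha$ is a single point $P$. Since $\N_4\subseteq\Pi_4$, $\C\subseteq\alpha$ and $\var\cap\alpha=\C$ (each generator meets $\alpha$ only in its conic point), we get $\N_4\cap\C=\N_4\cap\alpha\subseteq\Pi_4\cap\alpha=\{P\}$, so $\N_4$ meets $\C$ in at most the point $P$. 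If $\N_4\cap\C=\{P\}$ then $P=\C\cap\N_4$ and we are done.

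The remaining, and genuinely delicate, case is $\N_4\cap\C=\emptyset$; here I would locate $P$ via the pencil from part~1. For each generator $\ell_\theta$, meeting $\alpha$ in $C_\theta\in\C$, the $5$-space $\Pi_5^\theta=\langle\Pi_4,\ell_\theta\rangle$ meets $\var$ in $\N_4\cup\ell_\theta$ by part~1; the same degree-six obstruction shows $\Pi_5^\theta\cap\alpha$ is a line $m_\theta$, which contains both $P$ and $C_\theta$. Then $m_\theta\cap\C=\Pi_5^\theta\cap\var\cap\alpha=(\N_4\cap\C)\cup\{C_\theta\}=\{C_\theta\}$, so $m_\theta$ is the tangent to $\C$ at $C_\theta$ and $P$ lies on it. Thus $P$ lies on all $q+1$ tangents of $\C$. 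This is the crux, and it splits on the parity of $q$: for $q$ odd a point lies on at most two tangents of a conic, which is impossible since $q+1\geq 7$, so this case cannot arise; for $q$ even every tangent passes through the nucleus and the nucleus is their unique common point, forcing $P$ to be the nucleus. Assembling the cases yields the stated dichotomy. I expect the tangent-concurrency step, together with the parity split it forces, to be the main obstacle; the rest is degree bookkeeping through Lemma~\ref{5-space-quintic} and Corollary~\ref{3-4-no-gen}.
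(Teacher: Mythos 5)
Your proof is correct, but both parts take routes that differ from the paper's. In part 1 you derive ``exactly one generator per $5$-space'' from the degree count $5=4+1$ on the hyperplane section; be aware that Lemma~\ref{5-space-quintic} as stated still permits the case of a $4$-dim nrc and $0$ generators, so this step asserts more than the cited lemma licenses and really rests on the residual degree-one component being an honest line of $\var$ (hence a generator by Theorem~\ref{line-rqs}). The paper closes this more safely by a pigeonhole: each of the $q+1$ generators meets $\N_4$ in a point (Theorem~\ref{one-pt-gen}), so each spans with $\Pi_4$ a member of the pencil of $5$-spaces through $\Pi_4$; since a member contains at most one generator and there are exactly $q+1$ members, the correspondence is forced to be a bijection --- your closing parenthetical remark about a bijection is essentially this argument, and I would promote it to the main line. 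In part 2 the divergence is more substantial: the paper runs a trichotomy on the location of $P=\Pi_4\cap\alpha$ ($P\in\C$; $q$ even and $P$ the nucleus; neither) and eliminates the third case by choosing a secant $QR$ of $\C$ through $P$, so that $\langle\Pi_4,QR\rangle$ would meet $\var$ in $\N_4$ together with two points lying on distinct generators and off $\N_4$, contradicting Lemma~\ref{5-space-quintic}. You instead split on whether $\N_4\cap\C$ is $\{P\}$ or empty, and in the empty case show $P$ lies on all $q+1$ tangents of $\C$, which is impossible for $q$ odd and pins $P$ to the nucleus for $q$ even. Both are sound (your tangent-concurrency step is correctly justified, since $P\notin\C$ in that case and each $\langle\Pi_4,\ell_\theta\rangle$ meets $\alpha$ in a line through $P$ meeting $\C$ only at $C_\theta$); the paper's secant argument is a quicker contradiction, while yours yields as a by-product the explicit description of how the $5$-spaces $\langle\Pi_4,\ell_{Q_i}\rangle$ meet $\alpha$, which the paper only records inside its Case 2.
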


\begin{proof}
Let $\var$ be a ruled quintic surface in $\PG(6,q)$ with conic directrix $\C$ lying in a plane $\alpha$. 
 Let $\N_4$ be a 4-dim nrc contained in $\var$, so $\N_4$ lies in a 4-space which we denote $\Pi_4$.
 By Corollary~\ref{3-4-no-gen}, $\Pi_4$ does not contain a generator of $\var$.
 By Lemma~\ref{5-space-quintic}, a 5-space containing $\N_4$ can contain at most one generator of $\var$. Hence each of the $q+1$ 5-spaces containing $\N_4$ contains a distinct generator.  In particular, if we label the points of $\C$ by $Q_0,\ldots,Q_{q}$, and the generator through $Q_i$ by $\ell_{Q_i}$, then the $q+1$ 5-spaces containing $\N_4$ are $\Sigma_i=\langle\Pi_4,\ell_{Q_i}\rangle$, $i=0,\ldots,q$. 
 
If $\Pi_4$ met the plane $\alpha$  in a line, then $\langle\Pi_4,\alpha\rangle$ is a 5-space whose intersection with  $\var$ contains $\N_4$ and $\C$, contradicting Lemma~\ref{5-space-quintic}. Hence $\Pi_4$ meets $\alpha$ in a point $P$.  
 There are three possibilities for the point $P=\Pi_4\cap\alpha$, namely $P\in\C$; $q$ even and $P$ the nucleus of $\C$; or $q$ even, $P\notin\C$, and $P$ not the nucleus of $\C$.

Case 1, suppose  $P\in\C$. 
For $i=0,\ldots,q$,  the 5-space $\Sigma_i=\langle\Pi_4,\ell_{Q_i}\rangle$ meets $\alpha$ in a line $m_i$. Label $\C$ so that $P=Q_0$,  so the line $m_0$ is the tangent to $\C$ at $P$, and  $m_i$, $i=1,\ldots,q$, is the secant line $ PQ_i$. We now show that $P=Q_0$ is a point of $\N_4$. 
Let $i\in\{1,\ldots,q\}$, then by Lemma~\ref{5-space-quintic}, $\Sigma_i$ meets $\var$ in precisely $\N_4\cup \ell_{Q_i}$, and $\Sigma_i\cap\var\cap\alpha$ is the two points $P,Q_i$. As $P\notin\ell_{Q_i}$ we have $P\in\N_4$. That is, $P=\C\cap\N_4$.

Case 2, suppose $q$ is even and $P=\Pi_4\cap\alpha$ is the nucleus of $\C$.
For $i=0,\ldots,q$, the 5-space $\Sigma_i=\langle\Pi_4,\ell_{Q_i}\rangle$ meets $\alpha$ in the tangent to $\C$ through $Q_i$. In this case, $\C\cap\N_4=\emptyset$.

Case 3, suppose $P=\Pi_4\cap\alpha$ is not in $\C$, and $P$ is not the nucleus of $\C$. Now $P$ lies on some secant $m=QR$ of $\C$, for some points $Q,R\in\C$.
The intersection of the  5-space $\langle \Pi_4,m\rangle$ with $\var$ contains $\N_4$ and two points $R,Q$ of $\C$. 
As $R,Q$ lie on distinct generators and are not in $\N_4$, this contradicts Lemma~\ref{5-space-quintic}.  
Hence this case cannot occur. 
\end{proof}

 We can now  describe how a nrc of $\var$ meets the conic directrix, and note that Theorem~\ref{count-plane-line} shows that each possibility in part 3 can occur. 

\begin{corollary}\Label{N-r-on-V} 
Let $\var$ be a ruled quintic surface of $\PG(6,q)$ with conic directrix $\C$.
\begin{enumerate}
\item  A twisted cubic $\N_3\subseteq \var$ contains 0 points of $\C$.
\item  A $4$-dim nrc
$\N_4\subseteq\var$ either (i) contains 1 point of $\C$; or (ii) contains 0 points of $\C$, in which case  $q$ is even and the 4-space containing $\N_4$ contains  the nucleus of $\C$.
\item  A $5$-dim nrc
$\N_5\subseteq\var$  contains 0 or 1 or 2 points of $\C$.
\end{enumerate}
\end{corollary}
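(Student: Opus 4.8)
The plan is to obtain parts 1 and 2 essentially for free from the preceding results, and to settle part 3 by intersecting the $5$-space spanned by $\N_5$ with the plane $\alpha$ containing the conic directrix.

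For part 1, I would invoke Theorem~\ref{tc-direct}: every twisted cubic $\N_3$ on $\var$ is a directrix, so the $3$-space $\Pi_3$ it spans is disjoint from $\alpha$. Since $\C\subseteq\alpha$, we get $\N_3\cap\C\subseteq\Pi_3\cap\alpha=\emptyset$, so $\N_3$ contains no point of $\C$. For part 2, I would apply Lemma~\ref{5contains4}(2): the $4$-space $\Pi_4$ spanned by $\N_4$ meets $\alpha$ in a single point $P$, with either $P=\C\cap\N_4$, or $q$ even and $P$ the nucleus of $\C$. In the first case $\N_4\cap\C\subseteq\Pi_4\cap\alpha=\{P\}$ and $P\in\C$, so $\N_4$ meets $\C$ in exactly the one point $P$, giving (i). In the second case $P$ is the nucleus, hence $P\notin\C$, so $\N_4\cap\C\subseteq\{P\}$ forces $\N_4\cap\C=\emptyset$, with $q$ even and the nucleus of $\C$ lying inside $\Pi_4$, giving (ii).

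For part 3, let $\N_5$ be a $5$-dim nrc on $\var$, spanning the $5$-space $\Pi_5$. Since $\N_5\subseteq\Pi_5\cap\var$ and, by Lemma~\ref{5-space-quintic}, $\Pi_5\cap\var$ is a degree-$5$ curve, the irreducible degree-$5$ curve $\N_5$ must be all of $\Pi_5\cap\var$; in particular we are in case (a) of that lemma, so $\alpha\not\subseteq\Pi_5$ (otherwise $\C\subseteq\Pi_5\cap\var=\N_5$, impossible since $\N_5$ is irreducible of degree $5$ while $\C$ has degree $2$). Consequently $m:=\Pi_5\cap\alpha$ is a line, and because $\C\subseteq\var$ and $\C\subseteq\alpha$ we have $\N_5\cap\C=\Pi_5\cap\C=m\cap\C$. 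A line meets the non-degenerate conic $\C$ in $0$, $1$ or $2$ points, so $\N_5$ contains $0$, $1$ or $2$ points of $\C$, as claimed.

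The whole argument is short, so the only real point to watch is the intersection-theoretic step in part 3: one must be sure that $\Pi_5$ does not contain $\alpha$ (forcing $\Pi_5\cap\alpha$ to be a line) and that $\N_5\cap\C$ is genuinely cut out by this line. Both follow from Lemma~\ref{5-space-quintic} together with the irreducibility and degree of $\N_5$, and everything else reduces to a direct citation of Theorem~\ref{tc-direct} and Lemma~\ref{5contains4}.
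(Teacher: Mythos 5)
Your proposal is correct and follows essentially the same route as the paper: part 1 via Theorem~\ref{tc-direct} (a twisted cubic is a directrix, hence disjoint from $\alpha$), part 2 via Lemma~\ref{5contains4}(2), and part 3 by observing that Lemma~\ref{5-space-quintic} forces $\Pi_5$ not to contain $\alpha$, so $\Pi_5\cap\alpha$ is a line meeting $\C$ in at most two points. Your extra justification in part 3 (that $\N_5$ exhausts the degree-$5$ intersection, ruling out $\C\subseteq\Pi_5$) merely spells out what the paper cites implicitly.
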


\begin{proof} Let $\var$ be a ruled quintic surface of $\PG(6,q)$ with conic directrix $\C$ in a plane $\alpha$. Let $\N_3$ be a twisted cubic of $\var$, so by 
Lemma~\ref{tc-direct},  $\N_3$ is a directrix of $\var$, and so is disjoint from $\alpha$, proving part 1. Next let $\N_4$ be a 4-dim nrc on $\var$, and let $\Pi_4$ be the 4-space containing $\N_4$. 
 By Lemma~\ref{5contains4}, $\Pi_4\cap\alpha$ is a point $P$, and either $P=\C\cap\N_4$; or $q$ is even and $P$ is the nucleus of $\C$, and so $P\notin\var$, hence $P\notin\N_4$, proving part 2.
  Let $\Pi_5$ be a 5-space containing a 5-dim nrc of $\var$. By Lemma~\ref{5-space-quintic}, $\Pi_5$ cannot contain $\alpha$. Hence $\Pi_5$ meets $\alpha$ in a line, and so contains at most two points of $\C$, proving part 3.
\end{proof}

We now use the Bruck-Bose setting  to count the 4-dim nrcs contained in $\var$.

%
%
%

\begin{lemma} \Label{part-of-next-lemma}
Let $\S$ be a regular 2-spread in a 5-space $\si$ in $\PG(6,q)$. 
Position $\var$ as in Corollary~\ref{remark-BB}, so $\var$ has splash $\mathbb S\subset\S$.  
Then a 5-space/4-space about a plane $\beta\in\mathbb S$ cannot contain a 4-dim nrc of $\var$.
\end{lemma}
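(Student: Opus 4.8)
The plan is to suppose for contradiction that a $4$- or $5$-space $\Pi$ contains both the spread plane $\beta\in\mathbb S$ and a $4$-dim nrc $\N_4$ of $\var$, and then to exhibit a subspace of dimension at most $5$ whose intersection with $\var$ is a curve of degree greater than $5$, contradicting Lemma~\ref{5-space-quintic}. Write $\Pi_4=\langle\N_4\rangle$. I would split according to whether $\beta=\alpha$, the conic directrix plane, or $\beta\neq\alpha$; the case $\beta=\alpha$ is direct, while for $\beta\neq\alpha$ the key intermediate goal is to establish $\beta\subseteq\Pi_4$.

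When $\beta=\alpha$ this is handled directly by degree. The conic directrix satisfies $\C\subseteq\alpha=\beta\subseteq\Pi$, so $\Pi$ contains $\C\cup\N_4$, a curve of degree $2+4=6$; hence any $5$-space containing $\Pi$ meets $\var$ in a curve of degree at least $6$, contradicting Lemma~\ref{5-space-quintic}. So assume $\beta\neq\alpha$, and let $\N_3$ be the twisted cubic of $\var$ whose $3$-space $\Pi_3$ meets $\si$ in $\beta$; this exists and is unique by Corollary~\ref{tc-splash-3}, and in particular $\beta\subseteq\Pi_3$. If $\Pi$ is a $4$-space, then $\Pi=\Pi_4$ and $\beta\subseteq\Pi_4$ is immediate. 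The substantive case is $\dim\Pi=5$, where I must upgrade $\beta\subseteq\Pi$ to $\beta\subseteq\Pi_4$; this is the step I expect to be the main obstacle, and I would carry it out in the cubic extension.

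In $\PG(6,q^3)$, since $\var$ is positioned as in Corollary~\ref{remark-BB}, the transversals $g,g^q,g^{q^2}$ of $\S$ are generators of $\var^\star$, and the extended nrc $\N_4^\star\subseteq\var^\star$ meets each of them in exactly one point by Theorem~\ref{one-pt-gen}. The plane $\beta^\star$ also meets each transversal in exactly one point, say $B_g,B_{g^q},B_{g^{q^2}}$, by the definition of the transversals of a regular spread \cite[Section~25.6]{HT}. Now $\Pi^\star$ contains both $\N_4^\star\cap g$ and $B_g$, two points of the line $g$; if they were distinct then $g\subseteq\Pi^\star$, and applying the Frobenius collineation (which fixes $\Pi^\star$, $\N_4^\star$ and $\beta^\star$ and permutes the transversals cyclically) would give $g^q,g^{q^2}\subseteq\Pi^\star$ as well, forcing $\Pi^\star=\langle g,g^q,g^{q^2}\rangle=\sistar$ and hence $\Pi=\si$, which contradicts $\N_4\not\subseteq\si$. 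Therefore $\N_4^\star\cap g=B_g$, and likewise on $g^q$ and $g^{q^2}$. These three points lie on $\N_4^\star\subseteq\Pi_4^\star$ and are not collinear (three collinear transversal points would force $g,g^q,g^{q^2}$ into a $4$-space, whereas they span $\sistar$), so they span $\beta^\star$; hence $\beta^\star\subseteq\Pi_4^\star$ and therefore $\beta\subseteq\Pi_4$, as required.

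Finally, with $\beta\subseteq\Pi_4$ available in both cases and $\beta\subseteq\Pi_3$, I would observe $\beta\subseteq\Pi_4\cap\Pi_3$, so that $\dim\langle\Pi_4,\Pi_3\rangle\leq 4+3-2=5$. This span lies in a $5$-space $\Pi'$ with $\Pi'\cap\var\supseteq\N_4\cup\N_3$, the union of two distinct irreducible normal rational curves of degrees $4$ and $3$ (which share no component), hence a curve of degree $7$; this contradicts Lemma~\ref{5-space-quintic}. The delicate points to verify carefully are that $g,g^q,g^{q^2}$ span $\sistar$ and the resulting non-collinearity of $B_g,B_{g^q},B_{g^{q^2}}$, both of which follow from the regularity of $\S$.
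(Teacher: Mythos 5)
Your proof is correct, but it takes a genuinely different route from the paper's. The paper stays entirely in $\PG(6,q)$ and argues combinatorially: for $\beta\in\mathbb S\setminus\alpha$ it exhibits every $5$-space about $\beta$ containing at least one generator as $\langle\Pi_3,\ell\rangle$ for a tangent or secant line $\ell$ of $\alpha$ (where $\Pi_3$ is the $3$-space of the twisted cubic $\N_3$ attached to $\beta$ by Corollary~\ref{tc-splash-3}), so every such $5$-space already contains $\N_3$ and hence cannot contain a $4$-dim nrc; the remaining $5$-spaces about $\beta$ contain no generator and are excluded by Lemma~\ref{5contains4}, which shows every $5$-space about a $4$-dim nrc contains a generator. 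You instead pass to the cubic extension, use the transversals $g,g^q,g^{q^2}$ to force $\beta\subseteq\langle\N_4\rangle$, and then obtain a degree contradiction from $\N_3\cup\N_4$ lying in a $5$-space. Your argument is closer in spirit to the paper's proof of Theorem~\ref{tc-direct} and isolates a stronger structural fact (the whole spread plane $\beta$ would have to lie in the $4$-space of $\N_4$), whereas the paper's version reuses already-established counting and, along the way, records information about the $5$-spaces on $\beta$ that it needs later. Two points in your write-up deserve tightening: (i) you should justify $\N_4^\star\subseteq\var^\star$ before applying Theorem~\ref{one-pt-gen} to $\var^\star$ (e.g.\ via the parametrisation $\sigma$ from the proof of Theorem~\ref{the-nine-quad}, since the B\'ezout count $q+1>8$ against the defining quadrics is tight at $q=7$); and (ii) the non-collinearity of $B_g,B_{g^q},B_{g^{q^2}}$ is true but your stated reason is not quite right --- collinearity of the three points does not by itself confine the three transversal lines to a $4$-space; the clean justification is that for a regular spread each extended spread plane is spanned by its three transversal points, which is the content of \cite[Section 25.6]{HT}.
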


\begin{proof} 
Position $\var$ as described in Corollary~\ref{remark-BB}, so 
 $\S$ is a regular 2-spread in a 5-space $\si$,   the conic directrix of $\var$ lies in a plane $\alpha\in\S$, and $\mathbb S\subset\S$ denotes the  splash of $\var$. By Lemma~\ref{5-space-quintic}, a 4-space containing $\alpha$ cannot contain a 4-dim nrc of $\var$. 
Let  $\beta\in\mathbb S\setminus\alpha$, then by Corollary~\ref{tc-splash-3}, $\beta$ lies in exactly one 3-space  that contains a twisted cubic of $\var$, denote these by $\Pi_3$ and $\N_3$ respectively.
By Theorem~\ref{tc-direct}, $\N_3$ is a directrix of $\var$, and so  $\Pi_3$ is disjoint from $\alpha$. 
So if  $\ell_P$ is a generator  of $\var$, then $\Pi_4=\langle\Pi_3,\ell_P\rangle$ is a 4-space and   $\Pi_4\cap\alpha$ is the point $P=\ell_P\cap\C$. Let $\ell$ be a line of $\alpha$ through $P$ and let $\Pi_5=\langle\Pi_3,\ell\rangle$. If $\ell$ is tangent to $\C$, then $\Pi_5\cap\var$ is exactly $\N_3\cup\ell_P$. If $\ell$ is a secant of $\C$, so $\ell\cap\C=\{P,Q\}$, then $\Pi_5\cap\var$ consists of $\N_3,\ell_P$ and the generator  $\ell_Q$ through $Q$. 
Varying $\ell_P$ and $\ell$, we get all the 5-spaces that contain $\beta$ and contain 1 or 2 generators of $\var$. That is, each 5-space containing $\beta$ and 1 or 2 generators of $\var$ also contains $\N_3$. The remaining 5-spaces about $\beta$ hence contain 0 generators of $\var$ and  meet $\alpha$ in an exterior line of $\C$. 
Hence by  Lemma~\ref{5contains4}, none of the 5-spaces about $\beta$  contain a 4-dim nrc of $\var$. 
\end{proof}

\begin{lemma}\Label{count-4-nrc}
\begin{enumerate}
\item The number of 4-dim nrcs contained in $\var$ is $q^4-q^2$. 
\item 
The number of 5-spaces that meet $\var$ in a 4-dim nrc and one generator is 
$q^5+q^4-q^3-q^2$.
\end{enumerate}
\end{lemma}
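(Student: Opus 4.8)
The plan is to reduce part 2 to part 1 by a double count, and to prove part 1 by translating ``$4$-dim nrc on $\var$'' into ``non-degenerate conic through a fixed point'' in the plane model supplied by Theorem~\ref{the-nine-quad}.

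First I would dispose of part 2, assuming part 1. By Lemma~\ref{5contains4}, every $4$-dim nrc $\N_4$ of $\var$ lies in exactly $q+1$ $5$-spaces, and each of these meets $\var$ in $\N_4$ together with exactly one generator; conversely a $5$-space meeting $\var$ in a $4$-dim nrc and a generator has intersection of degree $4+1$, so its degree-$4$ irreducible component is the only $4$-dim nrc it contains. Counting incident pairs (a $4$-dim nrc, a $5$-space containing it that meets $\var$ in that nrc and one generator) in two ways therefore shows that the number of $5$-spaces of this type equals $(q+1)$ times the number of $4$-dim nrcs. Granting part 1 this is $(q+1)(q^4-q^2)=q^5+q^4-q^3-q^2$, as required.

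For part 1 I would use the parametrisation $\sigma:\pi\to\var$ of Theorem~\ref{the-nine-quad}, where $\pi$ is the plane of points $(x,y,z,0)$ and $\sigma$ is given by cubic forms having a double base point at $P_0=(0,0,1,0)$ (as recorded in that proof). The bookkeeping device is the degree--multiplicity relation: if $D\subset\pi$ is an irreducible curve of degree $d$ with multiplicity $m$ at $P_0$, then $\sigma(D)$ is an irreducible curve of $\var$ of degree $3d-2m$. This follows by B\'ezout, since a generic hyperplane of $\PG(6,q)$ pulls back under $\sigma$ to a cubic of $\pi$ with a double point at $P_0$, meeting $D$ in $3d$ points of which $2m$ are absorbed at $P_0$; the remaining $3d-2m$ points record the hyperplane intersections of $\sigma(D)$. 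As a check this returns degree $1$ for lines through $P_0$ (the generators) and degree $3$ for lines off $P_0$ (the $q^2$ twisted cubics). I would then show that $D\mapsto\sigma(D)$ restricts to a bijection between the non-degenerate conics of $\pi$ through $P_0$ and the $4$-dim nrcs of $\var$. An irreducible conic through $P_0$ has $d=2$, $m=1$, hence maps to an irreducible curve of degree $4$; as a degree-$4$ curve spans a space of dimension at most $4$, its image lies in a $5$-space, and Lemma~\ref{5-space-quintic} identifies the irreducible degree-$4$ component of a $5$-space section (which is this image) as a $4$-dim nrc. Conversely, given a $4$-dim nrc $\N_4$, its $\sigma$-preimage is an irreducible curve $D$ with $3\deg D-2m=4$; the only solution with $0\le m\le\deg D$ and $D$ irreducible is $\deg D=2$, $m=1$, a non-degenerate conic through $P_0$, and birationality of $\sigma$ off $P_0$ makes $D$ unique. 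Finally I would count: $\PG(2,q)$ has $q^5-q^2$ non-degenerate conics, so counting incident (point, conic) pairs gives that through a fixed point there pass $\frac{(q^5-q^2)(q+1)}{q^2+q+1}=q^4-q^2$ of them, establishing part 1.

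The main obstacle is the middle step: making the degree--multiplicity relation and the ``image is a genuine $4$-dim nrc'' claim fully rigorous, in particular confirming that the image of a conic through $P_0$ neither drops its span below a $4$-space nor fails to be an nrc, and that the only irreducible plane curves producing degree-$4$ images are precisely the conics through $P_0$. The conic count itself is classical and routine, and this plane-model route sidesteps the splash analysis of Lemma~\ref{part-of-next-lemma}, although that lemma gives an independent consistency check.
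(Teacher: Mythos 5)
Your argument is correct in outline but runs in the opposite direction to the paper's, and by a genuinely different method. The paper establishes part~2 first, entirely inside the Bruck--Bose framework: it shows (via Lemma~\ref{part-of-next-lemma}) that a $5$-space containing a $4$-dim nrc must contain a unique spread plane $\gamma\in\S\setminus\mathbb S$, then shows that each of the $(q+1)^2$ $5$-spaces about such a $\gamma$ containing exactly one generator is forced, by elimination through Lemma~\ref{5-space-quintic} and Corollary~\ref{remark-BB}, to meet $\var$ in a $4$-dim nrc plus that generator; multiplying by the $q^3-q^2$ choices of $\gamma$ gives part~2, and dividing by $q+1$ gives part~1. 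You instead prove part~1 intrinsically via the plane model $\sigma:\pi\to\var$, identifying the $4$-dim nrcs with the irreducible conics of $\pi$ through the base point $P_0$, and then recover part~2 by the same $(q+1)$-to-one double count read the other way. Your route avoids the spread/splash machinery altogether and yields extra structural information as a by-product (for instance, since the strict transform of a conic through $P_0$ meets the exceptional curve over $P_0$ in a single $\Fq$-rational point, every $4$-dim nrc meets $\C$ in exactly one point, showing that case~2(ii) of Corollary~\ref{N-r-on-V} is in fact vacuous); the paper's route, by contrast, locates the relevant $5$-spaces relative to the spread, which is reused in Theorem~\ref{count-plane-line}. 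Two points in your argument need tightening: the constraint $0\le m\le\deg D$ does not by itself exclude the solution $(\deg D,m)=(4,4)$ of $3\deg D-2m=4$, so you must invoke the fact that an irreducible curve of degree $d\ge 2$ has multiplicity at most $d-1$ at any point (a line through a point of multiplicity $d$ would be a component by B\'ezout); and the degree--multiplicity formula requires $\sigma|_D$ to be birational onto its image and the B\'ezout computation to be carried out over the algebraic closure, with the $\Fq$-rationality of $D$, of $P_0\in D$, and of the tangent direction at $P_0$ tracked separately -- all true here, but this is more algebraic-geometric overhead than the paper's purely incidence-theoretic count.
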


\begin{proof}
Without loss of generality  position $\var$ as described in Corollary~\ref{remark-BB}. 
That is, let $\S$ be a regular 2-spread in a 5-space $\si$,  let the conic directrix of $\var$ lie in a plane $\alpha\in\S$, and let $\mathbb S\subset\S$ be the  splash of $\var$.
Straightforward counting shows that  a 5-space distinct from $\si$ contains a unique spread plane. 
If this plane is in the splash $\mathbb S$, then by Lemma~\ref{part-of-next-lemma}, the 5-space does not  contain a 4-dim nrc of $\var$. So a 5-space containing a 4-dim nrc of $\var$ contains a unique plane of $\S\setminus\mathbb S$. 
 Consider a plane $\gamma\in\S\setminus\mathbb S$. Let $P\in\C$, let $\ell_P$ be the  generator  of $\var$ through $P$, and consider the 4-space $\Pi_4=\langle\gamma,\ell_P\rangle$. 
Suppose first that  $\Pi_4$ contains two generators of $\var$, then there is a 5-space $\Pi_5$ containing $\gamma$ and two generators. By Lemma~\ref{5-space-quintic}, $\Pi_5$ contains either $\C$ or a twisted cubic of $\var$. A 5-space distinct from $\si$ cannot contain two planes of $\S$, so $\Pi_5$ does not contain $\C$. Moreover, by Corollary~\ref{remark-BB}, $\Pi_5$ does not contain a twisted cubic of $\var$. Hence $\Pi_4$ contains exactly one generator of $\var$. 
If every generator of $\var$ contained at least one point of $\Pi_4$, then the intersection of $\Pi_4$ with $\var$ contains at least $\ell_P$ and $q$ further points, one on each generator. By Lemma~\ref{5-space-quintic} and Corollary~\ref{3-4-no-gen}, the only possibility is that $\Pi_4\cap\var$ contains a twisted cubic, which is not possible by Corollary~\ref{remark-BB}.
Hence there is at least one generator which is disjoint from $\Pi_4$, denote this $\ell_Q$. Label the points of $\ell_Q$ by $X_0,\ldots,X_q$, then the $q+1$ 5-spaces containing $\Pi_4$ are $\Sigma_i=\langle\gamma,\ell_P,X_i\rangle$. For each $i=0,\ldots,q$, the intersection of $\Sigma_i$ with  $\var$ contains the generator $\ell_P$ and the point $X_i$. By 
 Corollary~\ref{remark-BB}, $\Sigma_i$ 
 does not contain a twisted cubic of $\var$. Hence  by 
  Lemma~\ref{5-space-quintic}, $\Sigma_i\cap\var$ is $\ell_P$ and a 4-dim nrc.
  
That is, there are $(q+1)^2$ 5-spaces containing $\gamma$ and one generator of $\var$, each contains a 4-dim nrc of $\var$. 
Further, if $\Pi_5$ is a 5-space containing $\gamma$ and zero generators of $\var$, then by Lemma~\ref{5contains4}, $\Pi_5$ does not contain a 4-dim nrc of $\var$. Hence as there are $q^3-q^2$ choices for $\gamma$, there are  $(q+1)^2\times(q^3-q^2)=q^5+q^4-q^3-q^2 $ 5-spaces that meet $\var$ in one generator and a 4-dim nrc. By Lemma~\ref{5contains4}, every 4-dim nrc in $\var$ lies in $q+1$ such  5-spaces. Hence the number of 4-dim nrcs contained in $\var$ is $(q^5+q^4-q^3-q^2)/(q+1)$ as required. 
\end{proof}

%
%
%
%
%

We now count the number of 5-dim nrcs contained in $\var$. 

\begin{lemma}\Label{no-5-5nrc}
 The number of 5-spaces meeting $\var$ in a 5-dim nrc is $q^6-q^4$.
 \end{lemma}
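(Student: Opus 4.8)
The plan is to count by complementary counting over all hyperplanes of $\PG(6,q)$. By Lemma~\ref{5-space-quintic} every $5$-space meets $\var$ in exactly one of the four types (a)--(d), and these types are mutually exclusive, being distinguished by the degree of the unique non-linear irreducible component of $\K=\Pi_5\cap\var$ (namely a $5$-, $4$-, or $3$-dim nrc, or the conic). Hence the number $n_a$ of $5$-spaces of type (a) equals the total number of $5$-spaces of $\PG(6,q)$ minus the numbers $n_b,n_c,n_d$ of $5$-spaces of types (b), (c), (d). The total number of hyperplanes of $\PG(6,q)$ is $(q^7-1)/(q-1)=q^6+q^5+q^4+q^3+q^2+q+1$, so the whole task reduces to assembling $n_b,n_c,n_d$ from the earlier lemmas.

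For type (d), such a $5$-space contains the conic directrix $\C$ and hence its plane $\alpha$; conversely every $5$-space containing $\C$ meets $\var$ in type (d). By Lemma~\ref{5containsC} these number $r_3+r_2+r_1+r_0=q^3+q^2+q+1$, so $n_d=q^3+q^2+q+1$. For type (c), each such $5$-space contains a unique twisted cubic of $\var$ (the degree-$3$ component of $\K$), and by Theorem~\ref{tc-direct} there are exactly $q^2$ twisted cubics, each a directrix. By Lemma~\ref{5containsN} the $q^2+q+1$ $5$-spaces through a fixed twisted cubic split as $s_2+s_1+s_0=q^2+q+1$ according to the number of generators, giving $n_c=q^2(q^2+q+1)=q^4+q^3+q^2$. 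For type (b), each such $5$-space contains a unique $4$-dim nrc, and by Lemma~\ref{5contains4}(1) every $5$-space through a $4$-dim nrc contains exactly one generator; thus there are no $5$-spaces meeting $\var$ in a $4$-dim nrc and $0$ generators, so $n_b$ is exactly the count in Lemma~\ref{count-4-nrc}(2), namely $q^5+q^4-q^3-q^2$.

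Finally I would substitute and simplify:
\[
n_a=\Big(\tfrac{q^7-1}{q-1}\Big)-(q^5+q^4-q^3-q^2)-(q^4+q^3+q^2)-(q^3+q^2+q+1)=q^6-q^4.
\]
The arithmetic is the only computation; the substantive content, all established earlier, is that the four intersection types partition the hyperplanes and that the counts gathered for types (b), (c), (d) are genuine totals. The one point to be careful about — the main (minor) obstacle — is justifying that each type-(b)/(c)/(d) $5$-space is counted exactly once. This holds because the non-linear component of $\K$ (the conic, or the $3$- or $4$-dim nrc) is uniquely determined by $\Pi_5$: the sum $q^2(s_2+s_1+s_0)$ therefore counts each type-(c) $5$-space exactly once (no $5$-space contains two twisted cubics of $\var$), the Lemma~\ref{count-4-nrc} total counts each type-(b) $5$-space exactly once, and $n_d$ is precisely the set of $5$-spaces containing $\C$.
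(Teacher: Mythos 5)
Your proof is correct, but it takes a genuinely different route from the paper. The paper double-counts incident pairs $(A,\Pi_5)$ with $A\in\var\cap\Pi_5$: one side gives $(q+1)^2(q^5+q^4+\cdots+1)$, the other side is $n_2+n_3+n_4+n_5$ where each $n_r$ is a sum over 5-spaces of type $r$ \emph{weighted} by the number of points of $\var$ they contain (so $r_i(iq+q+1)$, $q^2s_i(iq+q+1)$, etc.), and one solves for the unknown $n_5=y(q+1)$. You instead count 5-spaces directly and unweighted: total hyperplanes minus the types (b), (c), (d) totals, which you correctly assemble as $q^5+q^4-q^3-q^2$, $q^2(s_0+s_1+s_2)=q^4+q^3+q^2$ and $r_0+r_1+r_2+r_3=q^3+q^2+q+1$. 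Your version needs one fact the paper's weighted count sidesteps, namely that \emph{every} 5-space meets $\var$, so that the four types of Lemma~\ref{5-space-quintic} genuinely partition all $(q^7-1)/(q-1)$ hyperplanes; you should cite Remark~\ref{remark-5} for this (a hyperplane meets every generator line, so the intersection is never empty). Your handling of the potential multiple-counting issue is right: no 5-space can contain two distinct twisted cubics, two 4-dim nrcs, etc., since the degree-5 curve $\Pi_5\cap\var$ has a unique nonlinear component. What your approach buys is lighter arithmetic and no weighted sums; what the paper's buys is that it never needs to know the total number of hyperplanes through nothing in particular, only incidences with points of $\var$. Both rest on exactly the same earlier lemmas, and both yield $q^6-q^4$.
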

 
 \begin{proof}
We show that the number of 5-spaces meeting $\var$ in a 5-dim nrc is $q^6-q^4$ by counting in two ways the number $x$ of  incident pairs $(A,\Pi_5)$ where $A$ is a point of $\var$, and $\Pi_5$ is a 5-space containing $A$. 
The number of ways to choose a point $A$  of $\var$ is 
$(q+1)^2.$
The point $A$   lies in 
$q^5+q^4+q^3+q^2+q+1$ 5-spaces. So 
$$x= (q+1)^2\times (q^5+q^4+q^3+q^2+q+1)=q^7+3q^6+4q^5+4q^4+4q^3+4q^2+3q+1.$$
Alternatively, we count the 5-spaces first; there are several possibilities for $\Pi_5$. By Lemmas~\ref{5-space-quintic}, $\Pi_5\cap\var$ is either empty, or contains an $r$-dim nrc, for some $r\in\{2,\ldots,5\}$. Let $n_r$ be the number of pairs $(A,\Pi_5)$ with $A\in\var\cap\Pi_5$ and $\Pi_5$ containing an $r$-dim nrc of $\var$. Note that 
\begin{eqnarray}\label{x-eqn}
x=n_2+n_3+n_4+n_5.
\end{eqnarray}
We now calculate  $n_2$, $n_3$ and $n_4$,  and then use (\ref{x-eqn}) to determine the number of 5-spaces meeting $\var$ in a 5-dim nrc.  
\begin{enumerate}

\item Consider a 5-space $\Pi_5$ that contains the conic directrix $\C$, so by Lemma~\ref{5containsC}, $\Pi_5$ contains 0, 1, 2 or 3 generators of $\var$, and the number of 5-spaces  meeting $\var$ in  exactly the conic directrix and $i$ generators  is $r_i$. In this  case  the number of ways to pick a point of $\Pi_5\cap\var$ is $iq+q+1$.
 Hence the total number of pairs  $(A,\Pi_5)$ with $\Pi_5$ containing the conic directrix is 
 $$n_2=\sum_{i=0}^3 r_i(iq+q+1)= 2q^4+4q^3+4q^2+3q+1.$$

\item Consider a 5-space $\Pi_5$ that contains a twisted cubic, then by Lemma~\ref{5containsN}, $\Pi_5$ contains 0, 1 or 2  generators of $\var$, and the number of 5-spaces    meeting $\var$ in   a given twisted cubic and $i$ generators  is $s_i$. In this  case the number of ways to pick $A$ in $\var\cap\Pi_5$  is $iq+q+1$.
 Hence the number of   pairs $(A,\Pi_5)$ with $\Pi_5$  containing a twisted cubic of $\var$ is $$n_3=q^2\sum_{i=0}^2 s_i(iq+q+1)= 2q^5+4q^4+3q^3+q^2.$$

\item Consider a 5-space $\Pi_5$ that contains a 4-dim nrc of $\var$. By Lemma~\ref{5contains4}, $\Pi_5$  contains 1  generator of $\var$.  By Lemma~\ref{count-4-nrc}, the number of 5-spaces  meeting $\var$ in  exactly a 4-dim nrc and one generator is  $q^5+q^4-q^3-q^2$.
 The number of ways to pick $A$ in $\var\cap\Pi_5$ is $2q+1$.
So $$n_4= (q^5+q^4-q^3-q^2) \times (2q+1)=2q^6+3q^5-q^4-3q^3-q^2.$$

\item Denote the number of $5$-spaces containing a 5-dim nrc  of $\var$ by $y$. Then the number of   pairs $(A,\Pi_5)$ with $\Pi_5$  containing a 5-dim nrc of $\var$  is 
$$n_5=y\times (q+1).$$

\end{enumerate}

Substituting the calculated values for $x,n_2,n_3,n_4,n_5$ into (\ref{x-eqn}) and rearranging gives $y=q^6-q^4$ as required. 
\end{proof}

%
%
%

Summarising the preceding  lemmas gives the 
 following  theorem  describing  $\var$. 

\begin{theorem}\Label{count-nrc}
 Let $\var$ be the ruled quintic surface in $\PG(6,q)$, $q\geq 6$.
 \begin{enumerate}
\item $\var$ contains exactly
 \begin{center}
\begin{tabular}{rl}
$q+1$ & lines,
\\ 1 &non-degenerate conic,
\\ $q^2$& twisted cubics,
\\ $q^4-q^2$& 4-dim nrcs,
\\ $q^6-q^4$ &5-dim nrcs.
\end{tabular}\end{center}
\item  A 5-space meets $\var$ in one of the following configurations
\begin{center}
\begin{tabular}{r|l}
Number of $5$-spaces&meeting $\var$ in the configuration\\
\hline
$q^6-q^4$      & 5-dim nrc\\ 
$q^5+q^4-q^3-q^2$       & 4-dim nrc and 1 generator \\
$(q^4-q^3)/2$ & twisted cubic \\
 $ q^3+q^2$     &  twisted cubic  and 1 generator \\
$(q^4+q^3)/2$      &  twisted cubic  and 2 generators\\
 $(q^3-q)/3$ & conic \\
  $q^3/2+q/2+1$    & conic  and 1 generator \\
 $ q^2+q$     & conic  and 2 generators\\
  $(q^3-q)/6$    & conic  and 3 generators.\\
%
 \end{tabular}
 \end{center}
 \end{enumerate}
\end{theorem}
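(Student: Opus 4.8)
The plan is to treat this as a bookkeeping theorem: both parts should follow by collecting the counts already established in the preceding lemmas, so the real work lies in confirming that the classification of $5$-spaces is exhaustive and that the nine totals are mutually consistent, rather than in any new geometric argument.

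For Part~1 I would read off each count directly from the relevant earlier result. The generator count $q+1$ is Theorem~\ref{line-rqs}; the single non-degenerate conic is Theorem~\ref{conic-dir-unique}; the $q^2$ twisted cubics is Theorem~\ref{tc-direct}; the $q^4-q^2$ $4$-dim nrcs is Lemma~\ref{count-4-nrc}(1); and the $q^6-q^4$ $5$-dim nrcs is Lemma~\ref{no-5-5nrc}. No further computation is needed.

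For Part~2 I would assemble the table from Lemmas~\ref{5containsC}, \ref{5containsN}, \ref{count-4-nrc} and \ref{no-5-5nrc}, invoking Lemma~\ref{5-space-quintic} to guarantee these are the only possible intersection types. The four ``conic and $i$ generators'' rows are exactly the values $r_3,r_2,r_1,r_0$ of Lemma~\ref{5containsC}. The three ``twisted cubic and $i$ generators'' rows come from Lemma~\ref{5containsN}(2): since each such $5$-space meets $\var$ in a degree-$5$ curve with a single cubic component, it is counted once, and the totals are $q^2 s_i$, giving $(q^4-q^3)/2$, $q^3+q^2$ and $(q^4+q^3)/2$. The ``$4$-dim nrc and $1$ generator'' row is Lemma~\ref{count-4-nrc}(2), and the ``$5$-dim nrc'' row is Lemma~\ref{no-5-5nrc}; note that Lemma~\ref{5contains4} forbids a $4$-dim nrc with $0$ generators, so no row is missing there.

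The step I expect to require the most care is the completeness and consistency check, not any single count. I would argue that every $5$-space of $\PG(6,q)$ falls into exactly one row: by Remark~\ref{remark-5} each $5$-space contains a point on every generator, hence meets $\var$ non-trivially, and by Lemma~\ref{5-space-quintic} the intersection is a degree-$5$ curve whose decomposition type places it in a unique row (the conic, twisted cubic, and higher nrcs being mutually exclusive as components of a degree-$5$ curve). As a confirming check I would sum the nine entries in the second column and verify the total equals the number of $5$-spaces (hyperplanes) of $\PG(6,q)$, namely $q^6+q^5+q^4+q^3+q^2+q+1$; this identity simultaneously validates the individual counts and certifies that no intersection type has been omitted.
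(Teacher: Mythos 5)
Your proposal is correct and matches the paper exactly: the paper offers no separate argument for this theorem, introducing it only with ``Summarising the preceding lemmas gives the following theorem,'' and your assembly of Part~1 from Theorems~\ref{line-rqs}, \ref{conic-dir-unique}, \ref{tc-direct} and Lemmas~\ref{count-4-nrc}, \ref{no-5-5nrc}, and of Part~2 from Lemmas~\ref{5containsC}, \ref{5containsN}, \ref{count-4-nrc}, \ref{no-5-5nrc} with exhaustiveness from Lemmas~\ref{5-space-quintic} and \ref{5contains4}, is precisely the intended reading. Your closing check that the nine entries sum to $q^6+q^5+q^4+q^3+q^2+q+1$ is a worthwhile verification the paper omits, and it does hold.
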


\section{$5$-spaces and the Bruck-Bose spread}\Label{sec-BB-5}

Let $\S$ be a regular 2-spread in a 5-space $\si$ in $\PG(6,q)$, and position $\var$ so that it corresponds to a tangent  $\Fq$-subplane of $\PG(2,q^3)$. So  $\var$ has splash $\mathbb S\subset\S$,  the conic directrix $\C$ lies in a plane $\alpha\in\mathbb S$, and each of the $q^2$ 3-spaces containing a twisted cubic directrix of $\var$ meets $\si$ in a distinct plane of $\mathbb S\setminus\alpha$.  
In Corollary~\ref{tc-splash-3}, we looked at how 3-spaces containing a plane of $\S$ meet $\var$. In Lemma~\ref{part-of-next-lemma}, we looked at how  4-spaces containing a plane of $\S$ meet $\var$. Next we look at how 
5-spaces containing a plane of $\S$ meet $\var$. Note that
straightforward counting shows that  a 5-space distinct from $\si$ contains a unique plane $\pi$ of $\S$, and meets every other plane of $\S$ in a line. If  $\pi=\alpha$, then Lemma~\ref{5containsC} describes the possible intersections with $\var$. The next theorem describes the possible intersections with $\var$ for the remaining  cases $\pi\in\mathbb S\setminus\alpha$ and $\pi\in\S\setminus\mathbb S$.

\begin{theorem}\Label{count-plane-line}
Position $\var$ as in Corollary~\ref{remark-BB}, so $\S$ is a regular 2-spread in a hyperplane $\si$, the conic directrix $\C$ lies in a plane $\alpha\in\S$ and $\var$ has splash $\mathbb S\subset\S$. Let $\ell$ be a line of $\alpha$ with $|\ell\cap\C|=i$ and let $\pi\in\S$, $\pi\neq\alpha$. Then the $q$ 5-spaces containing $\pi,\ell$ and distinct from $\si$ meet $\var$ as follows.
\begin{enumerate}
\item If $\pi\in\mathbb S\setminus\alpha$, then $q-1$ meet $\var$ in a 5-dim nrc, and 1 meets $\var$ in a twisted cubic and $i$ generators.
\item If $\pi\in\S\setminus\mathbb S$, then $q-i$ meet $\var$ in a 5-dim nrc, and $i$ meet $\var$ in a 4-dim nrc and $1$ generator.
\end{enumerate}
\end{theorem}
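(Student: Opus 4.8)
The plan is to fix the $4$-space $W=\langle\pi,\ell\rangle$ and analyse the $q+1$ $5$-spaces through it. First I would record the basic incidence data. Since $\pi\neq\alpha$ are distinct planes of the spread $\S$ they are disjoint, and $\ell\subset\alpha$ is disjoint from $\pi$, so $W=\langle\pi,\ell\rangle$ is a $4$-space lying inside $\si$. The $q+1$ $5$-spaces containing $W$ include $\si$ itself, leaving exactly the $q$ $5$-spaces named in the statement. For each such $\Pi_5\neq\si$ one has $\Pi_5\cap\si=W$ (a hyperplane of $\Pi_5$), hence $\Pi_5\cap\alpha=W\cap\alpha=\ell$ (as $\alpha\not\subset W$, otherwise $W\supseteq\langle\pi,\alpha\rangle$ which is a $5$-space), and therefore $\Pi_5\cap\C=\ell\cap\C$, a set of $i$ points. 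The crucial structural observation, which I would justify by noting that two disjoint spread planes span a $5$-space, is that $W$ contains no spread plane other than $\pi$; consequently $\Pi_5$ has $\pi$ as its unique spread plane and does not contain $\alpha$, so by Lemma~\ref{5-space-quintic} no such $\Pi_5$ meets $\var$ in the conic directrix.

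For part (1), where $\pi\in\mathbb{S}\setminus\alpha$, Lemma~\ref{part-of-next-lemma} says no $5$-space about $\pi$ contains a $4$-dim nrc, so by Lemma~\ref{5-space-quintic} each $\Pi_5$ meets $\var$ either in a $5$-dim nrc or in a twisted cubic together with some generators. By Corollary~\ref{tc-splash-3} there is a unique $3$-space $\Pi_3\supset\pi$ meeting $\var$ in a twisted cubic $\N_3$; since $\Pi_3\cap\si=\pi$ and $\ell$ is disjoint from $\Pi_3$, the span $\langle\Pi_3,\ell\rangle$ is a $5$-space through $W$, and it is the only one of the $q$ that contains a twisted cubic (any twisted cubic in some $\Pi_5\supset\pi$ has its $3$-space meeting $\si$ in a spread plane inside $W$, hence in $\pi$, so that $3$-space is $\Pi_3$). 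I would then show this distinguished $5$-space meets $\var$ in $\N_3$ together with exactly $i$ generators: a generator $\ell_P$ lies in $\langle\Pi_3,\ell\rangle$ iff $P\in\Pi_5\cap\C=\ell\cap\C$, using that $\ell_P$ meets $\N_3\subset\Pi_5$ and passes through $P$. The remaining $q-1$ $5$-spaces contain no twisted cubic, no $4$-dim nrc and no conic, so by elimination each meets $\var$ in a $5$-dim nrc.

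For part (2), where $\pi\in\S\setminus\mathbb{S}$, Corollary~\ref{tc-splash-3} and Corollary~\ref{remark-BB} rule out any twisted cubic in a $5$-space through $\pi$, since the $3$-space of such a cubic would meet $\si$ in a splash plane equal to $\pi\notin\mathbb{S}$. Thus each $\Pi_5$ meets $\var$ in a $5$-dim nrc or in a $4$-dim nrc and one generator. I would pin down which $5$-spaces carry a generator: for each of the $i$ points $P\in\ell\cap\C$, the generator $\ell_P$ meets $\si$ only in $P\in W$, so $\langle W,\ell_P\rangle$ is a $5$-space through $W$ containing $\ell_P$, and distinct $P$ give distinct such $5$-spaces (a common one would contain two generators, forcing a conic or twisted cubic by Lemma~\ref{5-space-quintic}, both excluded). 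Conversely any generator in a $\Pi_5$ has its base point in $\Pi_5\cap\C=\ell\cap\C$, so these are exactly the $i$ $5$-spaces carrying a generator, each carrying only one; by Lemma~\ref{5-space-quintic} each meets $\var$ in a $4$-dim nrc and that generator. The remaining $q-i$ $5$-spaces carry no generator; since a $5$-space meeting $\var$ in a $4$-dim nrc must contain a generator (Lemma~\ref{5contains4}), and conics and twisted cubics are excluded, each meets $\var$ in a $5$-dim nrc.

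The main obstacle is the bookkeeping that converts $|\ell\cap\C|=i$ into the exact number of generators: the heart of it is the identity $\Pi_5\cap\C=\ell\cap\C$ valid for every $\Pi_5\supset W$ with $\Pi_5\neq\si$, together with the two criteria for when a generator $\ell_P$ actually lies in a given $\Pi_5$ (automatic in part (1) through the twisted cubic $\N_3$, but selective in part (2)). Establishing that $W$ contains only the spread plane $\pi$, and that no excluded configuration (the conic in both cases, and in the respective cases the $4$-dim nrc or the twisted cubic) can occur, is what makes the count exact; the remaining intersection types then follow from Lemma~\ref{5-space-quintic} purely by elimination.
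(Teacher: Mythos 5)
Your proof is correct and rests on the same key ingredients as the paper's (Corollary~\ref{tc-splash-3} and Corollary~\ref{remark-BB} to locate or exclude twisted cubics, Lemma~\ref{5contains4} to exclude $4$-dim nrcs without a generator, and elimination via Lemma~\ref{5-space-quintic}), so the two arguments are essentially the same in substance. The one genuine difference is organisational: the paper first invokes the transitivity of the stabiliser of $\S$ and $\var$ (from \cite{FFA}) to reduce to representative lines $\ell_0,\ell_1,\ell_2$ and then treats the six cases separately, whereas you work directly with the general $4$-space $W=\langle\pi,\ell\rangle$, establish $\Pi_5\cap\C=\ell\cap\C$ once, and handle all values of $i$ uniformly via the criterion for when a generator $\ell_P$ lies in a given $\Pi_5$. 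This buys you a shorter, case-free argument that needs no external transitivity input, at the cost of a little more care in the incidence bookkeeping (e.g.\ that $W$ contains $\pi$ as its only spread plane), which you supply correctly.
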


\begin{proof}
By \cite{FFA}, the group  of collineations of $\PG(6,q)$ fixing $\S$ and $\var$ is transitive on the planes of $\mathbb S\setminus\alpha$ and on the planes of $\S\setminus\mathbb S$. As this group fixes the conic directrix $\C$, it is transitive on the lines of $\alpha$ tangent to $\C$,  
 the lines of $\alpha$ secant to $\C$ and 
 the lines of $\alpha$ exterior to $\C$.  
So without loss of generality let $\ell_0$  be a line of $\alpha$ exterior to  $\C$,  let $\ell_1$  be a line of $\alpha$ tangent to  $\C$, let $\ell_2$  be a line of $\alpha$ secant to  $\C$,  let $\beta$ be a plane  in $\mathbb S\setminus\alpha$, and  let $\gamma$ be a plane of $\S\setminus\mathbb S$. For $i=0,1,2$, label  the 4-spaces 
 $\Sigma_{4,i}=\langle\beta,\ell_i\rangle$ and $\Pi_{4,i}=\langle\gamma,\ell_i\rangle$. 
 By Corollary~\ref{tc-splash-3}, as $\beta\in\mathbb S\setminus\alpha$, there is a unique twisted cubic of $\var$ that lies in a 3-space about $\beta$, denote this 3-space by $\Pi_3$. Hence for $i=0,1,2$, there is a unique 5-space containing $\Sigma_{4,i}$ whose intersection with $\var$ contains a twisted cubic, namely the 5-space $\langle\Pi_3,\ell_i\rangle$.

First consider the line $\ell_0$ which is  exterior to $\C$. 
A 5-space meeting $\alpha$ in $\ell_0$ contains 0 points of $\C$, and so   contains 0 generators of $\var$.  The 4-space
  $\Sigma_{4,0}=\langle \beta,\ell_0\rangle$ lies in $q$ 5-spaces distinct from  $\si$,  each containing 0 generators of $\var$. 
 Exactly one of these 5-spaces, namely $\langle \Pi_3,\ell_0\rangle$,  contains a   
 twisted cubic of $\var$.
 The remaining $q-1$ 5-spaces about $\Sigma_{4,0}$ contain 0 generators, and do not contain a conic or twisted cubic of $\var$, so by Theorem~\ref{count-nrc}, they  meet $\var$ in a 5-dim nrc, proving part 1 for $i=0$.  For part 2, let $\Pi_5\neq\si$ be any 5-space containing  $\Pi_{4,0}=\langle \gamma,\ell_0\rangle$. 
As $\gamma\notin\mathbb S$, by Corollary~\ref{remark-BB}, $\Pi_5$ cannot contain a twisted cubic of $\var$. 
As $\Pi_5$ contains 0 generator lines of $\var$ and does not contain a conic or twisted cubic of $\var$, by Theorem~\ref{count-nrc}, $\Pi_5$ meets $\var$ in a 5-dim nrc. That is, the $q$ 5-spaces (distinct from $\si$) containing $\Pi_{4,0}$ meet $\var$ in a 5-dim nrc, proving part 2 for $i=0$.

Next consider the line $\ell_1$ which is  tangent to $\C$.  Let $P=\ell_1\cap\C$ and denote  the generator of $\var$ through $P$ by $\ell_P$. 
A 5-space meeting $\alpha$ in a tangent  line contains 1 point of $\C$, and so   contains at most one generator of $\var$.  So exactly one 5-space contains $\Sigma_{4,1}$ and a generator, namely the 5-space $\langle\Sigma_{4,1},\ell_P\rangle$. 
Consider the 5-space $\langle \Pi_3,\ell_1\rangle$, it  contains $P$ and a twisted cubic of $\var$ which by  Corollary~\ref{N-r-on-V} is disjoint from $\alpha$, hence $\langle \Pi_3,\ell_1\rangle$ contains the generator $\ell_P$. That is, 
$\langle \Pi_3,\ell_1\rangle$ contains $\beta$, $\ell_1$, $\ell_P$ and so 
  $\langle \Pi_3,\ell_1\rangle=\langle \Sigma_{4,1},\ell_P\rangle$. That is,   the intersection of $\langle \Sigma_{4,1},\ell_P\rangle$ with $\var$ is  a twisted cubic and one generator. 
Let $\Pi_5\neq \si$ be one of the remaining $q-1$ 5-spaces  (distinct from $\si$) that contains $\Sigma_{4,1}$, so $\Pi_5$ contains 0 generators of $\var$ and does not contain a conic or twisted cubic of $\var$.  So by Theorem~\ref{count-nrc}, $\Pi_5$ meets $\var$ in a 5-dim nrc,  
proving part 1 for $i=1$. For part 2, we  consider $\Pi_{4,1}=\langle \gamma,\ell_1\rangle$. By Corollary~\ref{remark-BB},  as $\gamma\notin\mathbb S$, no 5-space containing $\Pi_{4,1}$  contains a twisted cubic of $\var$. The 5-space $\langle\Pi_{4,1},\ell_P\rangle$ contains one generator of $\var$,  so by Theorem~\ref{count-nrc}, it meets $\var$ in exactly a 4-dim nrc and the generator $\ell_P$. Let $\Pi_5\neq\si$ be one of the remaining $q-1$ 5-spaces containing $\Pi_{4,1}$, then $\Pi_5$ contains 0 generators of $\var$.   So by Theorem~\ref{count-nrc}, $\Pi_5$ meets $\var$ in a 5-dim nrc,  
proving part 2 for $i=1$.

Finally, consider the line $\ell_2$ which is  secant to $\C$.    Let $\C\cap\ell_2=\{P,Q\}$ and let $\ell_P,\ell_Q$ be the generators of $\var$ through $P,Q$ respectively.  The intersection of the  5-space $\langle \Pi_3,\ell_2\rangle$ and $\var$ contains  a twisted cubic, and $P$ and $Q$. By Corollary~\ref{N-r-on-V}, this twisted cubic is disjoint from $\alpha$, so $\langle\Pi_3,\ell_2\rangle$  contains the two generators $\ell_P,\ell_Q$. Thus
$\langle \Pi_3,\ell_2\rangle=\langle\Sigma_{4,2},\ell_P\rangle=\langle\Sigma_{4,2},\ell_Q\rangle=\langle\Sigma_{4,2},\ell_P,\ell_Q\rangle$. 
The remaining $q-1$ 5-spaces  (distinct from $\si$) about $\Sigma_{4,2}$ contain 0 generators and two points of $\C$. By Lemma~\ref{5contains4} they cannot contain a 4-dim nrc of $\var$. So by Theorem~\ref{count-nrc}, they meet $\var$ in a 5-dim nrc, proving part 1 for $i=2$.
For part 2, let $\Pi_5\neq\si$ be a 5-space containing $\Pi_{4,2}=\langle\gamma,\ell_2\rangle$.
By Corollary~\ref{remark-BB}, $\Pi_5$ does not contain a twisted cubic of $\var$, as $\gamma\notin\mathbb S$. 
So by Theorem~\ref{count-nrc}, $\Pi_5$ contains at most one generator of $\var$. Hence $\langle \Pi_{4,2},\ell_P\rangle$, $\langle \Pi_{4,2},\ell_Q\rangle$ are distinct 5-spaces about $\Pi_{4,2}$, and by Theorem~\ref{count-nrc}, they each meet $\var$ in a 4-dim nrc and one generator. 
Let $\Sigma_5\neq\si$ be one of the 
 remaining $q-2$ 5-spaces about $\Pi_{4,2}$. Then $\Sigma_5$ contains 0 generators of $\var$, and so  by Theorem~\ref{count-nrc}, meets $\var$ in a 5-dim nrc, proving part 2 for $i=2$.
\end{proof}

\end{document}